\newcommand{\F}{\mathbb{F}}
\newcommand{\K}{\mathbb{K}}
\renewcommand{\L}{\mathbb{L}}
\newcommand{\N}{\mathbb{N}} 
\newcommand{\Q}{\mathbb{Q}} 
\newcommand{\Z}{\mathbb{Z}} 
\newcommand{\1}{\mathbbm{1}} 
\newcommand{\Fq}{\mathbb{F}_q}
\newcommand{\HH}{\mathcal{H}} 
\newcommand{\LL}{\mathcal{L}} 
\newcommand{\NN}{\mathcal{N}}
\newcommand{\qlb}{\overline{\Q}_\ell}
\newcommand{\fqb}{\overline{\F}_q}
\newcommand{\hl}{(H,\LL)}
\newcommand{\dg}{\mathscr{D}(G)} 
\newcommand{\dx}{\mathscr{D}(X)} 
\newcommand{\dgg}{\mathscr{D}_G(G)} 
\newcommand{\dggp}{\mathscr{D}_{G'}(G')} 
\newcommand{\dgx}{\mathscr{D}_G(X)} 
\DeclareMathOperator{\ind}{ind}
\DeclareMathOperator{\id}{id}
\DeclareMathOperator{\Hom}{Hom}
\DeclareMathOperator{\pr}{pr}
\renewcommand{\url}[2]{\href{#1}{\textcolor{blue}{#2}}} 
\newtheorem{theorem}{Theorem}
\numberwithin{theorem}{section}
\newtheorem{corollary}[theorem]{Corollary}
\newtheorem{lemma}[theorem]{Lemma}
\newtheorem{prop}[theorem]{Proposition}
\newtheorem*{conjecture}{Conjecture}
\newtheorem{mainthm}{Theorem}
\theoremstyle{remark}
\newtheorem*{remark}{Remark}
\theoremstyle{definition}
\newtheorem{definition}[theorem]{Definition}
\newtheorem*{example}{Examples}
\newtheorem*{nonex}{Non-example}
\def\l@subsection{\@tocline{2}{0pt}{2.5pc}{5pc}{}}
\title{Unipotent groups with trivial $\L$-packets \\ are easy}
\author{Sadie Lipman}
\address{Department of Mathematics, University of Michigan, Ann Arbor, Michigan}
\email{slipman@umich.edu}
\date{April 23, 2026}
\thanks{This work was supported by NSF-DMS-2507946, NSF-DMS-1840234, and a Simons Dissertation Fellowship.}
\begin{document}

\bibliographystyle{alphaurl}

\maketitle

\begin{abstract}
    In 2006, Boyarchenko and Drinfeld conjectured that for a unipotent algebraic group over a field of positive characteristic, every geometric point is contained in the neutral connected component of its centralizer if and only if its $\L$-packets of character sheaves are singletons. In 2013, Boyarchenko proved the ``only if" direction for $\fqb$. In this paper, we complete the proof of the conjecture in this case. Along the way, we explore the relationship between general algebraic groups satisfying this property and their Asai twisting operator.
\end{abstract}
\section*{Introduction}
The theory of character sheaves for unipotent groups was first conjectured by George Lusztig in \cite{lusztig2006}. This theory was later developed by Mitya Boyarchenko and Vladimir Drinfeld in \cite{foundations}, \cite{characters}, and \cite{chsheaves}. In 2006, Boyarchenko and Drinfeld proposed seven initial conjectures in the development of this theory (\cite{motivatedintro}). Six of the seven were proved in \cite{foundations} and \cite{chsheaves}. The goal of this paper is to complete the proof of the last remaining conjecture in the case $k=\fqb$:
 
\begin{conjecture}[{\cite[Conjecture 5]{motivatedintro}}]\label{conj}
    Let $G$ be a unipotent group over a field $k$ of positive characteristic. $G$ is easy if and only if it has trivial $\L$-packets.
\end{conjecture}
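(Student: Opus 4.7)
Since Boyarchenko settled the ``only if'' direction in 2013, I focus on the converse: assuming $G$ has trivial $\L$-packets, every $g\in\gfb$ must lie in $Z_G(g)^\circ$. I proceed by contrapositive and use the Asai twisting operator $\te$ on the block $e\dg$ attached to a Heisenberg idempotent $e=\ehl$ as the bridge between $\L$-packets and component groups of centralizers. Recall that the $\L$-packet at $e$ is, up to normalization, the set of $\te$-orbits on $F$-stable isomorphism classes of simples of $e\dg$, so triviality of all $\L$-packets of $G$ is equivalent to the statement that, for every admissible pair $\hl$, the operator $\te$ fixes the isomorphism class of every $F$-stable simple of $e\dg$.

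Assume for contradiction that $g\in\gfb$ has $g\notin Z_G(g)^\circ$. After enlarging $\Fq$ I may take $g\in\gf$, so the component group $A=\pi_0(Z_G(g))$ is non-trivial and carries a Frobenius action. I will construct an admissible pair $\hl$ adapted to $g$: $H$ is built from $Z_G(g)^\circ$ together with the smallest connected subgroup of $G$ containing $g$, and $\LL$ is a multiplicative local system on $H$ whose monodromy factors through a non-trivial character $\chi\in\widehat{A}$. The Heisenberg idempotent $\ehl$ should then produce a simple character sheaf $M\in\ehl\dg$ supported on the closure of the $G$-orbit of $g$. A Grothendieck--Lefschetz computation of the stalk of $M$ at $g$ identifies the induced action of $\te$ on the isomorphism class of $M$ with $\chi$, hence non-trivial, contradicting the reformulation above.

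The main obstacle is the construction and analysis of $\hl$: I need simultaneously that $\hl$ is admissible (so that $\ehl$ is a well-defined minimal idempotent), that $\ehl\dg$ contains a simple whose support meets the $G$-orbit of $g$, and that the Asai operator really does reduce to $\pi_0(Z_G(g))$-data on that simple. The admissibility and support conditions pin $H$ down rigidly in terms of $Z_G(g)^\circ$ and the connected envelope of $g$, while the Asai computation demands a careful choice of $\LL$ matched with $\chi$. Morally, this runs Boyarchenko's ``only if'' argument in reverse, but the extra difficulty is extracting geometric input --- an explicit admissible pair producing a non-singleton packet --- from the purely algebraic defect that a single centralizer is disconnected; I expect the bulk of the technical work to live here, and it is the step most likely to force a generalization of the Asai formalism beyond the unipotent setting, consistent with the paper's broader promise.
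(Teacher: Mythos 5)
Your proposal heads in a genuinely different direction from the paper's argument---you go by contrapositive and try to manufacture, from a disconnected centralizer $Z_G(g)$, an explicit admissible pair $(H,\LL)$ whose $\L$-packet is provably non-trivial---but it rests on a misconception that undercuts the whole scheme. You assert that the $\L$-packet at $e$ is, up to normalization, the set of Asai-orbits on $F$-stable isomorphism classes of simples of $e\dgg$. That is not the definition (nor a reformulation the paper establishes): $\L$-packets are the indecomposables of the perverse heart $\mathscr{M}_e^{perv}$, a purely categorical notion over $\fqb$ with no reference to $F$ or to any operator acting on the category. The Asai twisting operator $\Theta$ acts on the function space $C(G^F/\sim)$, and its link to the category is Deshpande's identity $\Theta(t_M)=\theta_M^{-1}t_M$, which exhibits the traces $t_M$ as an eigenbasis with eigenvalues the \emph{twist automorphisms} $\theta_M$. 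Collapsing these two layers---treating $\Theta$ as an operator on objects of $e\dgg$ whose orbits define $\L$-packets---is not a harmless normalization; it is exactly the gap your argument would need to bridge, and it is never bridged.

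Even granting the contrapositive framing, the construction of $(H,\LL)$ is left implicit and is hard to make precise. ``The smallest connected subgroup of $G$ containing $g$'' is not well-defined for unipotent $G$ in characteristic $p$: every element has finite $p$-power order, so the Zariski closure of $\langle g\rangle$ is finite with trivial neutral component, and there need be no minimal connected subgroup through $g$. You would also have to verify all three admissibility conditions, establish the asserted support property of a simple in the resulting Hecke category, and carry out a stalk computation identifying the twist with a character of $\pi_0(Z_G(g))$---none of which is sketched, as you acknowledge. The paper's proof avoids all of this: triviality of $\L$-packets forces, via \Cref{edgg cat thm}, every character sheaf to be a shift of a minimal closed idempotent $e$, which automatically has trivial twist since $e$ is the unit of $e\dgg$; then \Cref{eigen} and \Cref{basis} show $\Theta^{(m)}$ is trivial on $C(G^{F^m}/\sim)$ for all $m$, and \Cref{asai prop} converts that to easiness (after \Cref{conn lem} supplies connectedness). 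The direction is proved directly, and no explicit admissible pair is ever extracted from a given $g$.
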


 In \cite{chsheaves}, Boyarchenko proved the ``only if" direction when $k=\fqb$. The following theorem completes the proof of the conjecture in this case:
 
\begin{mainthm}[\ref{thm}]
    If $G$ is a unipotent group over $\fqb$ with trivial $\L$-packets, then $G$ is easy.
\end{mainthm}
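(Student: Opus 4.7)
The plan is to prove the contrapositive: if $G$ is not easy, then $G$ admits a character sheaf whose $\L$-packet has size greater than one. By hypothesis there exists a geometric point $g \in \gfb$ with $g \notin Z_G(g)^\circ$, and the task is to turn this non-easiness into an explicit witness of a nontrivial $\L$-packet. Guided by the abstract, the approach is to run the argument through the Asai twisting operator, since Boyarchenko's half of the conjecture can be read as the statement that easy groups have trivial Asai twist on character sheaves.

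First I would establish the dictionary that two simple perverse character sheaves on $G$ lie in the same $\L$-packet if and only if they are related by the Asai twisting operator; this should already be implicit in \cite{chsheaves} and is the natural counterpart of Boyarchenko's direction. Next, I would compute how the Asai twist acts on an induced character sheaf $\ind_H^G \LL$ associated to an admissible pair $\hl$, expressing the action in terms of a transformation of the pair itself. This localizes the problem to centralizer data: the relevant pairs should turn out to be of the form $(Z_G(g)^\circ, \LL)$ for suitably chosen multiplicative local systems $\LL$.

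Setting $H = Z_G(g)$ with identity component $H^\circ$, conjugation by $g$ acts trivially on $H^\circ$, yet $g$ represents a nontrivial class in $\pi_0(H)$. I would select a multiplicative local system $\LL$ on $H^\circ$ such that $(H^\circ, \LL)$ is admissible and such that the class of $g$ pairs nontrivially with $\LL$ under the Asai recipe. The corresponding character sheaf $\ind_{H^\circ}^G \LL$ would then be moved by the Asai twist to a non-isomorphic character sheaf lying in the same $\L$-packet, contradicting triviality. In other words, the non-easy element $g$ itself is the obstruction being detected.

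The main obstacle I anticipate is the existence of such an $\LL$: the nontrivial class of $g$ in $\pi_0(Z_G(g))$ must actually be witnessed by some \emph{admissible} multiplicative local system on $H^\circ$. Abstractly, this should reduce, via a Pontryagin-duality argument applied to a suitable commutative subquotient of $H^\circ$ through which the relevant pairing factors, to a character-theoretic separation statement. Balancing this existence against the admissibility condition on $(H^\circ, \LL)$ — likely by invoking the Heisenberg-idempotent machinery of \cite{foundations} to replace $H^\circ$ by a slightly larger connected subgroup when necessary — is where the delicate work resides, and is the step I expect to require the most care.
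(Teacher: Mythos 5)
Your proposal contains a genuine conceptual gap that would make the argument fail as written. The ``dictionary'' you lead with --- that two simple perverse character sheaves lie in the same $\L$-packet if and only if they are related by the Asai twisting operator --- is not a theorem and is in fact false as stated. The $\L$-packet of $M$ is determined by the unique minimal closed idempotent $e$ with $e * M \cong M$; the Asai operator $\Theta$ is an operator on the function space $C(G^F/\sim)$, not a functor on $\dgg$. The actual relationship (Proposition~\ref{eigen}, from Deshpande) is that for an $F$-stable character sheaf $M$, the trace function $t_M$ is an \emph{eigenvector} of $\Theta$ with eigenvalue $\theta_M^{-1}$, where $\theta_M$ is the twist automorphism of $M$. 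So $\Theta$ acts diagonally in the basis $\{t_M\}$ and does not ``move'' one character sheaf to another, within a packet or otherwise. Your plan of producing a second sheaf by ``applying $\Theta$ to $\ind_{H^\circ}^G \LL$'' therefore does not get off the ground. Beyond that, you explicitly flag the key existence step --- finding an admissible pair $(H^\circ, \LL)$ whose local system separates the nontrivial class of $g$ in $\pi_0(Z_G(g))$ --- as unresolved, so even granting the framework, the contrapositive argument is incomplete.

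The paper's proof runs the implication in the forward direction and is shorter than what you sketch. First, Lemma~\ref{conn lem} (via the admissible pair $(G^\circ,\qlb)$ and Proposition~\ref{conn prop}) shows $G$ is connected, so Theorem~\ref{asai prop} applies. That theorem reduces ``$G$ is easy'' to ``$\Theta^{(m)}$ is trivial on $C(G^{F^m}/\sim)$ for all $m$.'' By Theorem~\ref{basis} the trace functions of $F^m$-stable character sheaves form a basis, and by Proposition~\ref{eigen} they form an eigenbasis for $\Theta^{(m)}$ with eigenvalues $\theta_M^{-1}$. Trivial $\L$-packets plus Theorem~\ref{edgg cat thm}(b) force every character sheaf to be a shift of a minimal closed idempotent $e$, and $\theta_e=1$ because $e$ is the unit for convolution in $e\dgg$; hence all eigenvalues are $1$ and $\Theta^{(m)}$ is the identity. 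The lesson is that the Asai operator enters the proof at the level of eigenvalues of trace functions, not as an operation that shuffles sheaves inside an $\L$-packet.
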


The proof relies on the relationship between character sheaves and Shintani descent described by Deshpande in \cite{unipshint}.  In the process, we prove a statement about general algebraic groups:

\begin{mainthm} [\ref{asai prop}]  Let $G$ be a connected algebraic group over $\fqb$ with a Frobenius $F$. The Asai twisting operator is trivial on $C(G^{F^m}/\sim)$ for all $m \in \Z_{>0}$ if and only if $G$ is easy.
\end{mainthm}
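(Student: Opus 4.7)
The plan is to analyze the Asai twisting operator $t_m$ on $C(G^{F^m}/\sim)$ via its action on the basis of characteristic functions of conjugacy classes, and to relate triviality of this action to the structure of centralizers in $G(\fqb)$. Since $G$ is connected, Lang--Steinberg applied to $F$ gives a presentation $g = a^{-1} F(a)$ for each $g \in G^{F^m}$, with $a$ unique up to left multiplication by $G^F$. The first step is to trace the definition of $t_m$ from \cite{unipshint} through this presentation to obtain an explicit formula for $t_m$ as a permutation of $G^{F^m}/\sim$, expressed in terms of $a$ and $F^m(a)$. Well-definedness on conjugacy classes then follows from the $G^F$-ambiguity in the choice of $a$.

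For the forward direction, assume $G$ is easy and fix $g \in G^{F^m}$. The formula from the first step shows that $t_m([g])$ lies in the same $F^m$-stable geometric conjugacy class as $[g]$. The easy hypothesis implies that this geometric class meets $G^{F^m}$ in a single $G^{F^m}$-orbit: the obstruction to two $F^m$-fixed points of a geometric class being $G^{F^m}$-conjugate lies in $H^1(\langle F^m \rangle, \pi_0(Z_G(g)))$, and the condition $g \in Z_G(g)^\circ$ forces the relevant obstruction class to vanish. Hence $t_m([g]) = [g]$ for every $g$, giving $t_m = \id$.

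For the converse, I would argue by contrapositive. Suppose $g_0 \in G(\fqb)$ satisfies $g_0 \notin Z_G(g_0)^\circ$, and set $A = \pi_0(Z_G(g_0))$. Choose $m$ large enough that $g_0 \in G^{F^m}$ and that $F^m$ acts trivially on $A$. Then the $G^{F^m}$-conjugacy classes inside the geometric class of $g_0$ form an $A$-torsor. Using the explicit description of $t_m$, I would show that $t_m$ acts on this $A$-torsor by translation by the image $\bar g_0 \in A$, which is nontrivial by assumption; this gives $t_m([g_0]) \ne [g_0]$, contradicting triviality of $t_m$ on $C(G^{F^m}/\sim)$.

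The main obstacle is the explicit identification in the first step together with the ``translation by $\bar g_0$'' description used in the converse. Deriving the precise permutation induced by $t_m$ on $G^{F^m}/\sim$, and then recognizing that the shift on the $A$-torsor of $G^{F^m}$-classes inside an $F^m$-stable geometric class is exactly the image of $g_0$ in $\pi_0(Z_G(g_0))$, is where the substance of the argument lies. Once this geometric description of $t_m$ is in hand, both implications reduce to the single observation that $G$ is easy precisely when $\bar g_0$ is trivial in $\pi_0(Z_G(g_0))$ for every $g_0 \in G(\fqb)$.
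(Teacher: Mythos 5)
Your strategy mirrors the paper's: represent $N_1$ explicitly via Lang, observe that $N_1(g)$ is geometrically conjugate to $g$, locate the obstruction to $G^{F^m}$-conjugacy in $H^1(F^m,\pi_0(Z_G(g)))$, and use finiteness of $\pi_0(Z_G(g))$ to choose $m$ so that $F^m$ is trivial on it. The paper's version of your first step is its Lemma~\ref{twisted cent}, which recasts triviality of the Asai operator as solvability of the Lang equation $g=z^{-1}F^m(z)$ in $Z_G(g)$; this is exactly the permutation analysis you propose. So the route is the same, but two of your intermediate claims are false as stated and need correction.

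In the forward direction you assert that, for $G$ easy, the geometric conjugacy class of $g$ meets $G^{F^m}$ in a \emph{single} $G^{F^m}$-orbit. This is not implied by easiness: rational classes inside a geometric class are parameterized by $H^1(F^m,\pi_0(Z_G(g)))$, and $g\in Z_G(g)^\circ$ does not force $\pi_0(Z_G(g))$ to be trivial, so that $H^1$ may be large. What is true, and what suffices, is that the \emph{specific} obstruction comparing $[g]$ to $N_1([g])$ is the image of $g$ itself in $H^1(F^m,\pi_0(Z_G(g)))$; this vanishes because Lang's theorem applied to the connected group $Z_G(g)^\circ$ (which contains $g$ by easiness) produces $z\in Z_G(g)^\circ$ with $g=z^{-1}F^m(z)$, trivializing the cocycle. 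Similarly, in the converse your assertion that the $G^{F^m}$-classes inside the geometric class of $g_0$ form an $A$-torsor on which $N_1$ acts by ``translation by $\bar g_0$'' is not accurate: with $F^m$ trivial on $A=\pi_0(Z_G(g_0))$, these classes are indexed by $H^1(F^m,A)$, which is the pointed set of conjugacy classes of $A$, not an $A$-torsor unless $A$ is abelian. What you actually need is the weaker statement that the obstruction class for $N_1([g_0])=[g_0]$ is the class of $\bar g_0$ in $H^1(F^m,A)$, which is nontrivial because $\bar g_0\ne 1$ in $A$. Both errors are patchable in place---you never really use the ``single orbit'' or ``torsor'' claims beyond invoking the specific obstruction---but as written those steps would fail for groups with disconnected or non-abelian component groups of centralizers.
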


\subsection*{Acknowledgments} 
I thank Tanmay Deshpande for generously sharing his expertise and ideas. I am grateful to my advisor, Charlotte Chan, for introducing me to the subject area, many helpful meetings, and comments on a previous draft. I also thank Sydney Mathematical Research Institute for excellent working conditions.

\section{Preliminaries}
\subsection*{Notation} Let $G$ be an algebraic group over $k=\fqb$ with $q=p^n$. Fix a prime $\ell \neq p$. We define an algebraic group to be a smooth group scheme of finite type over $k$. A unipotent algebraic group (often just ``unipotent group" for brevity) is an algebraic group over $k$ that is isomorphic to a closed subscheme of $\mathrm{UL}_n$, the group of unipotent $n \times n$ upper triangular matrices, for some $n$.

\subsection{Easy algebraic groups}
\begin{definition}
    We say an algebraic group is \emph{easy} if for every $g \in G(k)$, we have $g$ is in the neutral connected component of its centralizer $Z(g)$ in $G$.
\end{definition}

This definition can be extended to algebraic groups over non-algebraically closed fields by checking this condition on geometric points after base change to the algebraic closure. It is clear that all easy groups must be connected as for any $g \in G(k)$ we have $g \in Z(g)^\circ \subset G^\circ$.

\begin{example}

        $\mathrm{GL}_n$ is an easy algebraic group. Over a field of characteristic zero, we have that any reductive group is easy if and only if its derived group is simply connected and its center is connected.

        Any unipotent group over a field of characteristic zero is easy. This follows from the fact that every unipotent group in characteristic zero is connected and each closed subgroup of a unipotent group is again a unipotent group. However, it is not the case that every unipotent group over a field of characteristic $p$ is easy.

        The simplest example of any easy unipotent group over $k$ is $\mathrm{UL}_n$. In general, if $U$ is the maximal unipotent subgroup of a reductive group $G$ over $k$, then $U$ is easy given the characteristic of $k$ is sufficiently large.

\end{example}

\begin{nonex}
    Let $G$ be a noncommutative connected unipotent group of dimension $2$. Then, $G$ is not easy when the characteristic of $k$ is larger than 2. The only nontrivial connected subgroup of $G$ is $Z(G)$. If $g \in G(k)\setminus Z(G)(k)$, then $Z(g)^\circ \neq G$. Thus, $g \notin Z(G)=Z(g)^\circ$.
\end{nonex}

\subsection{The Asai twisting operator} \label{asai def}

We recall some of the constructions given by Shoji in \cite{shintani}. Let $F: G \to G$ be a Frobenius map and denote the $F$-fixed points by $G^F$. Let $G^F/ \sim$ denote the conjugacy classes of $G^F$.

Given a connected algebraic group $G$, we can use Lang's theorem to define a bijective map
$$\mathrm{N}_{F/F}: (G^{F}/\sim) \to (G^{F}/\sim)$$
$$g=xF(x)^{-1} \mapsto F(x)^{-1}x.$$

We define the \emph{Asai twisting operator} to be the map:
$$\Theta_F:=N_{F/F}^*: C(G^{F}/\sim) \to C(G^{F}/\sim).$$

 This map is not in general the identity. \Cref{twisted cent} and \Cref{asai prop} tell us for which connected algebraic groups $\Theta_F$ acts as the identity.

\begin{remark}
    The Asai twisting operator is a special case of the $m$-th Shintani map with $m=1$.
\end{remark}

\subsection{Character sheaves for unipotent groups}
For the remainder of Section 1, $G$ is a unipotent group. Let $\mathscr{D}(X):=D_c^b(X, \qlb)$. Given an action of $G$ on $X$, we can define the equivariant derived category, which we will denote by $\mathscr{D}_G(X)$. Let $\dgg$ be the equivariant derived category, where $G$ acts on itself by conjugation.

For unipotent groups we can use the following definition of $\mathscr{D}_G(X)$:

Let $\alpha: G \times X \rightarrow X$ be the action morphism and let $\mathrm{pr}_2: G \times X \rightarrow X$ be the projection. Denote the multiplication morphism by $\mu: G \times G \rightarrow G$. Let $\mathrm{pr}_{23}: G \times G \times X \rightarrow G \times X$ be the projection onto the second and third factors. 
\begin{definition}\label{dgx def}
    An object of the category $\dgx$ is a pair $(M, \phi)$ , where $M \in \dx$ and $\phi: \alpha^*M \xrightarrow{\sim} \pr_2^*M$ is an isomorphism in $\mathscr{D}(G\times X)$ such that
    $$\pr_{23}^*(\phi) \circ(\id_G \times \alpha)^* (\phi) = (\mu \times \id_X)^*(\phi),$$
    i.e., 
    $$(\id_G \times \alpha)^*\alpha^*M \cong (\mu \times \id_X)^*\alpha^*M \xrightarrow{(\mu \times \id_X)^*(\phi)} (\mu \times \id_X)^*\pr_2^*M \cong \pr_{23}^*\pr_2^*M$$
    equals the composition
    $$(\id_G \times \alpha)^*\alpha^* M \xrightarrow{(\id_G \times \alpha)^*(\phi)}(\id_G \times \alpha)^* \pr_2^*M \cong \pr_{23}^* \alpha^*M \xrightarrow{\pr_{23}^*(\phi)} \pr_{23}^*\pr_2^*M.$$
    A morphism $(M, \phi) \rightarrow (N, \psi)$ in $\dgx$ is a morphism $\nu: M \rightarrow N$ in $\dx$ satisfying $\phi \circ \alpha^*(\nu) = \pr_2^*(\nu) \circ \psi$. The composition of morphisms in $\dgx$ is defined to be equal to their composition in $\dx$.
\end{definition}

\begin{remark}
    This definition does not hold for a general algebraic group $G$. To see that for unipotent groups this definition is equivalent to the standard definition, see \cite[Appendix C]{foundations}.
\end{remark}

\begin{definition}
    Let $M, N \in \dg$. Let $\mu: G \times G \rightarrow G$ be the multiplication morphism, and let $\pr_1,\pr_2: G \times G \rightarrow G$ denote the first and second projections, respectively. Then, we define \emph{convolution with compact supports} of $M$ and $N$ to be $M * N := \mu_!(\pr_1^*M \otimes \pr_2^*N)$. Going forward, convolution will always mean convolution with compact supports.
\end{definition}
\begin{definition}
    We say $e \in \dgg$ is a \emph{weak idempotent} if $e * e \cong e$. We say $e$ is \emph{minimal}, if for all weak idempotents $f$, we have $e * f \cong e$ or $0$.
\end{definition}
\begin{definition}
    We say $e \in \dgg$ is a \emph{closed idempotent} if there exists an arrow $\pi: \1 \rightarrow e$ that becomes an isomorphism after convolution with $e$. Such an $e$ is \emph{minimal} if for all closed idempotents $f \in \dgg$, we have $e * f \cong e$ or $0$.
\end{definition}

\begin{definition}
   Let $e \in \dgg$ be a minimal closed idempotent. Define the associated Hecke subcategory $e\dgg:=\{M \in \dgg | e * M \cong M\}$. Let $\mathscr{M}_e^{perv}$ denote the subcategory consisting of all $M\in e\dgg$, such that $M$ is perverse after forgetting its $G$-equivariant structure. 
   
   The \emph{$\L$-packet of character sheaves} corresponding to $e$, which we will denote by $\L(e)$, are the indecomposable objects in $\mathscr{M}_e^{perv}$. We say an object $M \in \dgg$ is a \emph{character sheaf} if it lies in the $\L$-packet corresponding to $e$ for some minimal closed idempotent $e$. 
\end{definition}

\subsection{Minimal closed idempotents and admissible pairs}
We recall contructions of minimal closed idempotents and results about their corresponding $\L$-packets given in \cite{foundations}. This construction relies on the notion of an \emph{admissible pair}.

\begin{definition}
    We say $\LL\in \dgg$ is a \emph{multiplicative local system} if $\LL$ is a rank one $\qlb$-local system on $G$ such that $\mu^*\LL \cong \LL \boxtimes \LL$, where $\mu: G \times G \rightarrow G$ is the multiplication morphism. 
\end{definition}

\begin{definition}
    Let $G$ be a unipotent group over $k$, and let $\hl$ be a pair consisting of a connected subgroup $H$ of $G$ and a multiplicative local system $\LL$ on $H$ . The \emph{normalizer} $N_G\hl$ of $\hl$ in $G$ is defined to be the stabilizer of the isomorphism class $[\LL] \in H^*(k)$ in the normalizer $N_G(H)$ of $H$ in $G$.
\end{definition}
\begin{definition}
    Let $G$ be a unipotent group over $k$. An \emph{admissible pair} for $G$ is a pair $(H,\LL)$ consisting of a connected subgroup $H \subset G$ and a multiplicative local system $\LL$ on $H$ such that the following three conditions hold:
    \begin{itemize}
        \item[(1)] Let $G'$ be the normalizer of $(H,\LL)$ in $G$, and let $G'^\circ$ denote its neutral connected component. Then $G'^\circ /H$ is commutative.
        \item[(2)] The $k$-group morphism $\varphi_\LL: (G'^\circ /H)_{perf} \rightarrow (G'^\circ / H)_{perf}^*$ is an isogeny, where $\varphi_\LL$ is constructed in \S 3.3 of \cite{foundations}.
        \item[(3)] For every $g\in G(k) \setminus G'(k)$, we have 
        $$\LL |_{(H\cap H^g)^\circ} \not\cong \LL^g|_{(H\cap H^g)^\circ},$$
        where $H^g = g^{-1}Hg$ and $\LL^g$ is the multiplicative local system on $H^g$ obtained from transport of structure via the map $[h \mapsto g^{-1}hg]$.
    \end{itemize}
\end{definition}

\begin{remark}
    This paper relies on many results from Boyarchenko and Drinfeld's paper constructing the theory of character sheaves. The results about character sheaves and the category $\dgg$ are often stated for perfect unipotent groups $G$. Since we have an equivalence between the \'etale topos of a unipotent group and that of its perfectization, these results automatically hold for arbitrary unipotent groups. See \cite[Remark 1.26]{foundations} for more details. As such, when stating these results going forward, we remove the assumption of perfect.
\end{remark}

Theorem 1.15 in \cite{foundations} gives us a relationship between $\mathscr{M}_e^{perv}$ and the category $e\dgg$. This gives us a way to describe objects in $\dgg$ using the structure of the $\L$-packets in $G$.

\begin{theorem}[{\cite[Theorem 1.15]{foundations}}]\label{edgg cat thm}
    Let $G$ be a unipotent algebraic group over $k$, and let $e\in \dgg$ be a minimal closed idempotent.
    \begin{itemize}
        \item[(a)]$\mathscr{M}_e^{perv}$ is a semisimple abelian category with finitely many simple objects. 
        \item[(b)]There exists a (necessarily unique) integer $n_e$ such that $e[-n_e] \in \mathscr{M}_e^{perv}$. One has $0\le n_e \le \dim G$. The subcategory $\mathscr{M}_e := \mathscr{M}_e^{perv}[n_e]$ of the monoidal category $e\dgg$ is monoidal.
        \item[(c)]The perverse t-structure on $\dg$ induces a t-structure on $e\dgg$, and the canonical functor $D^b(\mathscr{M}_e^{perv}) \rightarrow e\dgg$ is an equivalence.
    \end{itemize}
\end{theorem}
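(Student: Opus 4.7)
The plan is to exploit the fact that minimal closed idempotents in $\dgg$ admit an explicit classification in terms of admissible pairs, reducing each of (a)--(c) to a concrete description of the Hecke subcategory attached to such a pair. First I would invoke the fundamental correspondence from \cite{foundations} that every minimal closed idempotent $e \in \dgg$ is isomorphic to $\ehl$ for some admissible pair $(H,\LL)$, where $\ehl$ is constructed by averaging a shifted extension of $\LL$ from $H$ up to $G$. This identification converts convolution with $e$ into a geometric induction functor, endowing $e\dgg$ with a second, more tangible incarnation in terms of $\LL$-equivariant sheaves on $G'$ (the normalizer) pushed forward to $G$, and it will drive every part of the proof.

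For \textbf{(a)}, using condition (2) in the admissible pair definition, the quotient $A := G'^\circ/H$ is commutative and $\varphi_\LL : A_{perf} \to A_{perf}^*$ is an isogeny with \emph{finite} kernel. I would identify $\mathscr{M}_e^{perv}$ with (the induction from $G'$ to $G$ of) the category of perverse sheaves on $G'$ that transform according to $\LL$ under $H$ and are equivariant for a finite Heisenberg-type central extension built from $\ker \varphi_\LL$. Because that finite group scheme carries a nondegenerate commutator pairing by construction, the resulting category decomposes into a finite direct sum of simple summands indexed by characters of the center of the Heisenberg extension, yielding semisimplicity together with the finite-simples claim, and hence also finiteness of $\L$-packets.

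For \textbf{(b)}, tracking the cohomological shifts in the inductive construction of $\ehl$ yields an explicit value of $n_e$ measuring the codimension data of the admissible pair (morally $\dim(G/H)$ after accounting for the normalizer), and in particular $0 \leq n_e \leq \dim G$. Perversity of $e[-n_e]$ follows because the averaging morphism is smooth of the corresponding relative dimension, so the push--forward with this shift preserves perversity. Monoidality of $\mathscr{M}_e := \mathscr{M}_e^{perv}[n_e]$ then reduces to showing that the convolution of two such shifted perverse objects lies in the same perverse degree; this is a dimension count that uses that every object of $e\dgg$ has support controlled by the $G$-orbit of $H$, making the convolution fiber for $\mu$ smooth of the expected dimension.

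For \textbf{(c)}, I would verify that the perverse truncation functors $^{p}\tau_{\leq 0}$ and $^{p}\tau_{\geq 0}$ on $\dg$ preserve $e\dgg$: since $e[-n_e]$ is perverse by (b) and convolution with $e$ is an idempotent endofunctor of $\dgg$ that commutes (up to the shift $n_e$) with perverse truncation, truncating any $M \in e\dgg$ lands back in $e\dgg$, which induces the required t-structure. The canonical functor $D^b(\mathscr{M}_e^{perv}) \to e\dgg$ is then tautologically t-exact; full faithfulness reduces, via the semisimplicity of $\mathscr{M}_e^{perv}$ from (a), to the vanishing of higher $\mathrm{Ext}$-groups between simple objects on both sides (which on the left is automatic and on the right follows from the explicit Heisenberg description); essential surjectivity follows because every object of $e\dgg$ admits a finite filtration by its perverse cohomology sheaves, each of which lies in the image. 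The main obstacle is the semisimplicity claim in (a): translating the idempotent condition $e \ast M \cong M$ into a concrete Mackey-style geometric description requires careful bookkeeping of equivariance data, and the semisimplicity itself ultimately hinges on the non-degeneracy of $\varphi_\LL$ serving as the categorical replacement for the non-degenerate commutator pairing in classical Heisenberg representation theory.
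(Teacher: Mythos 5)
This statement is not proved in the paper at all: it is quoted verbatim as Theorem 1.15 of \cite{foundations} and used as a black box, so there is no internal argument to compare yours against; any assessment has to be against the Boyarchenko--Drinfeld proof itself. Your sketch does point at their actual strategy (classification of minimal closed idempotents by admissible pairs, reduction to a Heisenberg-type situation in which the finiteness of $\ker\varphi_\LL$ replaces a nondegenerate commutator pairing), but as a proof it has genuine gaps. First, the step that makes the whole reduction work --- that induction with compact supports $\ind_{G'}^{G}$ gives a (monoidal) equivalence between the Hecke subcategory $e'_\LL\dggp$ and $e\dgg$, so that statements about $\mathscr{M}_e^{perv}$ can be checked on the normalizer --- is itself one of the main theorems of \cite{foundations}, not a formal consequence of the classification of idempotents; you invoke it silently in all three parts.

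Second, in (c) the assertion that convolution with $e$ ``commutes (up to the shift $n_e$) with perverse truncation'' is essentially the statement to be proved, not an available input: convolution is far from perverse t-exact on $\dg$, and showing that it becomes exact after the shift $n_e$ on the Hecke subcategory is where the work lies. Similarly, full faithfulness of $D^b(\mathscr{M}_e^{perv}) \to e\dgg$ requires vanishing of higher $\mathrm{Hom}(M, N[i])$ computed \emph{in the equivariant derived category} for $M,N \in \mathscr{M}_e^{perv}$; this is a nontrivial vanishing theorem, not something that drops out of semisimplicity on the left plus an unspecified ``Heisenberg description'' on the right. Third, in (b) the proposed dimension count for monoidality is not sound as stated: objects of $e\dgg$ need not have support controlled by the $G$-orbit of $H$, and in the actual proof $n_e$ is identified with an explicitly computed functional dimension attached to the admissible pair, with monoidality of $\mathscr{M}_e$ deduced from the Heisenberg-case analysis rather than from generic smoothness of convolution fibers. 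So the proposal is a reasonable roadmap to the literature, but each of (a), (b), (c) still rests on major unproved inputs.
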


In particular, this tells us that $\L$-packets are finite and nonempty. Moreover, in the case that $G$ has trivial $\L$-packets, we have an explicit description of all character sheaves.

We can use admissible pairs to construct minimal closed idempotents in the following way: let $G$ be a unipotent group over $k$, and let $\hl$ be an admissible pair for $G$ with normalizer $G'$. Let $\K_H \cong \qlb [2\dim H]$ be the dualizing complex of $H$. Let $e_{\LL}=\LL \otimes \K_H$. Since $\hl$ is invariant under the conjugation action of $G'$, the complexes $\LL$ and $\K_H$ have $G'$-equivariant structures. Thus, $e'_{\LL} := \iota_! e_\LL$ lies in $\dggp$, where $\iota_!$ denotes the extension by zero functor. 

\begin{lemma}
    $e'_\LL \in \dggp$ is a closed idempotent, and is minimal as a weak idempotent.
\end{lemma}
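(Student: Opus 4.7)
The plan is to establish three facts in sequence: (i) that $e'_\LL$ is a weak idempotent, i.e.\ $e'_\LL * e'_\LL \cong e'_\LL$; (ii) that a canonical map $\pi: \1 \to e'_\LL$ becomes an isomorphism after convolution with $e'_\LL$, supplying the closed-idempotent structure; and (iii) that $e'_\LL$ is minimal among weak idempotents of $\dggp$. Throughout I write $i: H \hookrightarrow G'$ for the closed immersion.

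For (i), I would unpack the convolution directly. Because $e'_\LL = i_!(\LL \otimes \K_H)$ is supported on $H$ and $H$ is closed under the multiplication of $G'$, proper base change identifies $e'_\LL * e'_\LL$ with $i_! \mu_H{}_!\bigl((\LL \boxtimes \LL) \otimes \K_{H \times H}\bigr)$, where $\mu_H: H \times H \to H$ is the restricted multiplication. The multiplicative property $\mu_H^* \LL \cong \LL \boxtimes \LL$ together with the projection formula rewrite the inner pushforward as $\LL \otimes \mu_H{}_! \K_{H \times H}$. Since $H$ is connected unipotent, it is isomorphic to affine space as a variety, so the pushforward of the dualizing complex along $\mu_H$ simplifies, up to cohomological and Tate shifts, to $\K_H$. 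Unwinding the shifts yields $e'_\LL * e'_\LL \cong e'_\LL$.

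For (ii), the map $\pi: \1 \to e'_\LL$ is constructed from the canonical trivialization $\LL|_e \cong \qlb$ at the identity $e \in H$, which is automatic for a multiplicative local system. This provides a morphism from the skyscraper $\delta_e$ (the convolution unit) into $\LL \otimes \K_H$ after the appropriate shift, and applying $i_!$ gives $\pi$. Running the same projection-formula computation as in step (i), but keeping track of the specific trivialization, shows that $\pi * \id_{e'_\LL}: e'_\LL \to e'_\LL * e'_\LL$ is the isomorphism of (i).

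The main obstacle is step (iii). Given an arbitrary weak idempotent $f \in \dggp$, I would analyze $e'_\LL * f$ by first checking it is itself a weak idempotent supported on a union of cosets of $H$ in $G'$. Condition (1), that $G'^\circ/H$ is commutative, forces this convolution to be governed by multiplicative local systems on $G'^\circ/H$, i.e.\ by characters of its associated dual group. Condition (2), that $\varphi_\LL$ is an isogeny, ensures that $\LL$ is sufficiently generic among such local systems that no proper sub-character can refine it. Combining these, any weak idempotent $f$ either contributes a character compatible with $\LL$, in which case $e'_\LL * f \cong e'_\LL$, or produces orthogonal data, in which case $e'_\LL * f \cong 0$. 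This dichotomy is exactly the required minimality.
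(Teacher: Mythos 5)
The paper gives no argument for this lemma; it is stated as a recalled fact from the Boyarchenko--Drinfeld construction of Heisenberg idempotents, so there is no in-text proof to measure you against. Judged on its own, your steps (i) and (ii) track the standard computation. In (i) the base-change and projection-formula manipulation is correct: since every fiber of $\mu_H$ is a torsor under $H \cong \A^{\dim H}$, one has $(\mu_H)_!\K_{H\times H}\cong\K_H$, and combining this with $\mu_H^*\LL\cong\LL\boxtimes\LL$ gives $e'_\LL * e'_\LL\cong e'_\LL$. In (ii) the phrase ``after the appropriate shift'' hides the point; the honest statement is that $(1)^!(\LL\otimes\K_H)\cong(1)^*\LL\otimes(1)^!\K_H\cong\qlb$, so $\Hom(\delta_1,e'_\LL)\cong\qlb$ and the multiplicative trivialization $\LL_1\cong\qlb$ singles out the arrow $\pi$. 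That this $\pi$ becomes an isomorphism after convolving with $e'_\LL$ then does follow by re-running (i) with the unit map inserted. So (i)--(ii) are essentially right, if underexplained.

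Step (iii) is where the proposal does not actually prove anything. You assert the dichotomy $e'_\LL * f\cong e'_\LL$ or $0$ and attribute it to condition (1) reducing matters to local systems on $G'^\circ/H$ together with ``genericity'' coming from the isogeny $\varphi_\LL$ in condition (2), but no mechanism is supplied that turns these hypotheses into the dichotomy. The claim that $e'_\LL * f$ is ``supported on a union of cosets of $H$'' is vacuous (all of $G'$ is such a union) and does not constrain anything. The actual argument has two genuinely nontrivial parts: first, one must check that $e'_\LL * f$, when nonzero, is itself a weak idempotent lying in the Hecke subcategory $e'_\LL\dggp$ (this uses that weak idempotents in $\dggp$ commute up to isomorphism, i.e.\ the braiding); second, and this is the heart of the matter, one must show that $e'_\LL$ is the \emph{only} nonzero weak idempotent of $e'_\LL\dggp$. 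That second step is exactly what conditions (1) and (2) buy: they make $(H,\LL)$ a Heisenberg pair, which identifies $e'_\LL\mathscr{D}_{G'^\circ}(G'^\circ)$ with a modular tensor category whose unit is its only nonzero idempotent, and then one still has to descend this uniqueness along $\pi_0(G')$. None of this appears in your sketch; the conclusion is named rather than derived, so (iii) as written is a genuine gap.
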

The next theorem gives us the existence of minimal closed idempotents:
\begin{theorem}[{\cite[Theorem 1.49]{foundations}}]
    Let $G$ be a unipotent group over $k$.
    \begin{itemize}
        \item[(a)]Every minimal closed idempotent in $\dgg$ is minimal as a weak idempotent.
        \item[(b)]Every minimal weak idempotent in $\dgg$ is closed.
        \item[(c)]For every nonzero object $M \in \dg$, there exists a minimal closed idempotent $f\in \dgg$ with $f*M \ne 0$. 
    \end{itemize}
\end{theorem}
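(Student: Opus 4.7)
The plan is to establish the three parts in the order (b), (c), (a), using the admissible pair machinery as the engine. The lemma immediately preceding this theorem shows that for any admissible pair $\hl$ with normalizer $G'$, the complex $e'_\LL \in \dggp$ is a closed idempotent, minimal as a weak idempotent. Inducing along $G' \hookrightarrow G$ produces idempotents in $\dgg$; the heart of the proof is the classification statement that such induced idempotents exhaust all minimal weak (equivalently, all minimal closed) idempotents.

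For (b), I would show that every minimal weak idempotent $f \in \dgg$ is isomorphic to the induction of $e'_\LL$ for some admissible pair $\hl$. Using the conjugation-equivariance built into the definition of $\dgg$ together with the relation $f * f \cong f$, I would identify a connected subgroup $H \subset G$ carrying $f$ up to shift and translation, and extract from the perverse cohomology a multiplicative local system $\LL$ on $H$. Minimality of $f$ as a weak idempotent would then be translated into the three admissibility conditions on $\hl$: commutativity of $G'^\circ/H$, non-degeneracy of the isogeny $\varphi_\LL$, and the cocycle-like separation condition on conjugates $\LL^g$. Once $f$ is identified with the induction of $e'_\LL$, the preceding lemma supplies closedness.

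For (c), given nonzero $M \in \dg$, I would first produce some weak idempotent $f_0$ with $f_0 * M \neq 0$ (the convolution unit works), then descend: using a Noetherian argument on the dimension of support of $f$, combined with the finiteness supplied by \Cref{edgg cat thm}(a), extract an $f$ that is minimal among weak idempotents with $f*M \neq 0$. A short argument shows such an $f$ is in fact minimal as a weak idempotent, and (b) then guarantees $f$ is closed. For (a), given a minimal closed idempotent $e$, apply (c) to the nonzero object $e \in \dg$ to produce a minimal closed (and by (b), minimal weak) idempotent $f$ with $f * e \neq 0$. The structure of $\mathscr{M}_e^{perv}$ from \Cref{edgg cat thm}, combined with the fact that $f \in e\dgg$ since $f * e \cong f$, forces $f \cong e$, so $e$ inherits the minimal-weak-idempotent property of $f$.

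The main obstacle, and the technical heart of the argument, is the classification in (b): reconstructing the admissible pair $\hl$ from the raw data of a minimal weak idempotent $f$. Recognizing $H$, rather than a merely constructible piece of $\operatorname{supp}(f)$, as an honest connected subgroup uses $G$-equivariance together with the multiplicative constraint imposed by $f * f \cong f$, while descending the sheaf data to a multiplicative local system requires a careful cohomological analysis. Verifying the admissibility conditions from minimality alone, especially the non-degeneracy of $\varphi_\LL$, is the point at which the detailed machinery of §3 of \cite{foundations} is unavoidable, and this is what makes the theorem nontrivial despite the clean statement.
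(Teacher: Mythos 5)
First, a point of comparison: the paper you are annotating does not prove this statement at all --- it is imported verbatim from \cite[Theorem 1.49]{foundations}, so there is no internal proof to measure your proposal against; what follows assesses your outline against the argument actually given in \cite{foundations}.

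Your outline correctly identifies the admissible-pair machinery and the idempotents $e'_\LL$, $\ind_{G'}^{G}e'_\LL$ as the engine, and your formal deduction of (a) from (b) and (c) (minimality of $f$ as a weak idempotent gives $f*e\cong f$, minimality of $e$ as a closed idempotent plus the braiding gives $e*f\cong e$, hence $e\cong f$) is essentially the right bookkeeping. But the two load-bearing steps are not proved, and one of them is approached in a way that would not work as stated. For (c), producing an $f$ that is ``minimal among weak idempotents with $f*M\neq 0$'' by a Noetherian argument on dimension of support is unjustified: weak idempotents are not ordered by support in any way that guarantees termination, and your appeal to \Cref{edgg cat thm}(a) is circular, since in \cite{foundations} that theorem is itself established using the classification of minimal idempotents via admissible pairs. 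The actual proof of (c) is an induction on $\dim G$ through the geometric reduction process: take a pair $\hl\in\mathscr{P}_{norm}(G)$ maximal among those compatible with $M$; if $\LL$ is conjugation-invariant, \Cref{ad pair prop} makes it admissible and the induced idempotent does the job, and otherwise one passes to the proper subgroup $N_G\hl$ and applies induction together with a Mackey-type analysis of $\ind$. For (b), your plan to reconstruct $(H,\LL)$ and verify the three admissibility conditions directly from a raw minimal weak idempotent $f$ is both harder than necessary and not carried out (you defer it to ``the machinery of \S 3''); the standard route avoids it entirely by applying (c) to $M=f$, obtaining an induced idempotent $e=\ind_{G'}^{G}e'_\LL$ with $e*f\neq 0$, and then using that $e$ is minimal as a \emph{weak} idempotent together with minimality of $f$ to force $f\cong e$, whence $f$ is closed. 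As written, the proposal is a plausible road map whose genuinely hard content --- the reduction process behind (c) and the minimality of the induced idempotents --- is asserted rather than proved.
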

\begin{remark}
    Although we have that the minimal weak idempotents are the same and the minimal closed idempotents, it is not the case in general that weak idempotents are the same closed idempotents.
\end{remark}
The following theorem gives a classification of all minimal closed idempotents in $\dgg$ using admissible pairs of $G$:
\begin{theorem}[{\cite[Theorem 1.41]{foundations}}]
    Let $G$ be a unipotent group over $k$. 
    \begin{itemize}
        \item[(a)] Let $\hl$ be an admissible pair for $G$ with normalizer $G'$. Let $e'_\LL \in \dggp$ be the corresponding minimal idempotent. Then $f \in \ind_{G'}^G e'_\LL$ is a minimal closed idempotent in $\dgg$.
        \item[(b)] Every minimal closed idempotent $f \in \dgg$ arises from some admissible pair $\hl$ by means of the construction in part (a).
    \end{itemize}
\end{theorem}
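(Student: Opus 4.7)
My plan is to establish the correspondence between admissible pairs and minimal closed idempotents in both directions, using induction from the normalizer combined with a devissage of a minimal idempotent down to one supported on a connected subgroup.

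For part (a), I would proceed in three steps. First, the preceding lemma together with \cite[Theorem 1.49]{foundations} already gives that $e'_\LL \in \dggp$ is a minimal closed idempotent, since every minimal weak idempotent is closed and conversely. Second, since the induction functor $\ind_{G'}^G$ is monoidal for the convolution structures on $\dggp$ and $\dgg$, the object $f := \ind_{G'}^G e'_\LL$ is automatically a closed idempotent in $\dgg$. Third, to prove minimality I would compute $f * f'$ for an arbitrary closed idempotent $f' \in \dgg$ via a Mackey-type decomposition indexed by $G'$-double cosets in $G$, and invoke admissibility condition (3) — that $\LL|_{(H\cap H^g)^\circ} \not\cong \LL^g|_{(H\cap H^g)^\circ}$ for $g \notin G'$ — to force all off-diagonal contributions to vanish, leaving $f * f' \in \{f, 0\}$.

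For part (b), I would reverse-engineer an admissible pair from a given minimal closed idempotent $f \in \dgg$. By \Cref{edgg cat thm}(a), the category $\mathscr{M}_f^{perv}$ is a nonempty semisimple abelian category with finitely many simple objects, so we may pick a simple object $M$. Using its $G$-equivariant structure I would identify a $G$-stable support, and after a devissage produce a connected subgroup $H \subset G$ together with a rank-one $\qlb$-local system $\LL$ on $H$ such that $f \cong \ind_{G'}^G e'_\LL$ for $G' = N_G\hl$. The idempotent relation $f * f \cong f$ translates, after tracing through the induction, into the relation $\mu^*\LL \cong \LL \boxtimes \LL$, confirming that $\LL$ is multiplicative.

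The main obstacle is verifying in part (b) that the extracted pair $(H, \LL)$ actually satisfies the three admissibility conditions. Each failure corresponds to the existence of a strictly larger weak idempotent that properly absorbs $f$: if $G'^\circ/H$ were noncommutative or $\varphi_\LL$ were not an isogeny, one could extend $\LL$ to a strictly larger connected subgroup inside $G'^\circ$ and construct a stronger idempotent; a failure of condition (3) would produce a nontrivial convolution of $f$ with a conjugate of itself, contradicting minimality. Each contradiction hinges on the minimality of $f$ combined with the structure of $e'_\LL$ on $G'$ imposed by conditions (1) and (2), and this simultaneous bootstrap is the technically delicate heart of the argument in \cite{foundations}.
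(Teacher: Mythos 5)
This statement is quoted verbatim from \cite[Theorem 1.41]{foundations} and the paper offers no proof of it at all --- it is an imported input whose proof occupies a substantial portion of Boyarchenko--Drinfeld's work --- so your proposal has to be judged against that original argument, and as such it is an outline with two concretely misplaced steps rather than a proof. First, the induction functor $\ind_{G'}^G$ (induction with compact supports along a non-normal subgroup) is \emph{not} monoidal, so $f=\ind_{G'}^G e'_\LL$ is not ``automatically'' a closed idempotent; establishing $f*f\cong f$ is exactly where the Mackey-type condition (3) of admissibility enters, via a geometric Mackey decomposition of $\ind_{G'}^G e'_\LL * \ind_{G'}^G e'_\LL$ over $G'$-double cosets. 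Second, condition (3) cannot do the job you assign it for minimality: it compares $\LL$ only with its own $G$-conjugates, so it says nothing about the convolution $f*f'$ with an arbitrary unrelated closed idempotent $f'$. In \cite{foundations} minimality of $f$ is instead deduced from minimality of the Heisenberg idempotent $e'_\LL$ in $\dggp$ (forced by conditions (1) and (2)) together with properties of induction and the nontrivial equivalence between minimal weak and minimal closed idempotents (their Theorem 1.49), which is proved alongside rather than available as a black box.

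For part (b), the actual mechanism is an induction on $\dim G$ through the geometric reduction process: one picks a pair $\hl\in\mathscr{P}_{norm}(G)$ maximal among pairs compatible with $f$ --- note the local systems in $\mathscr{P}(G)$ are multiplicative by definition, so there is nothing to ``confirm'' by tracing $f*f\cong f$ through the induction --- and then either $\LL$ is $G$-invariant, in which case \Cref{ad pair prop} yields admissibility and $f$ is a Heisenberg-type idempotent, or one passes to the proper subgroup stabilizing $[\LL]$ and applies the inductive hypothesis there, using compatibility of induction in stages. The verification that the resulting pair satisfies conditions (1) and (2), which you yourself flag as the ``technically delicate heart,'' is precisely the bulk of Boyarchenko--Drinfeld's argument and is not supplied here; so while your sketch points in roughly the right direction, it does not constitute a proof, and its two concrete claims about where idempotency and minimality come from would fail as written.
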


In \cite{foundations}, Boyarchenko and Drinfeld develop a geometric reduction process for admissible pairs in $G$, which allows us to construct admissible pairs and closed idempotents from connected subgroups of $G$. We make use of this in the construction of particular closed idempotents to prove connectedness of unipotent groups with trivial $\L$-packets.

\begin{definition}
    If $G$ is an algebraic group over $k$, we define $\mathscr{P}(G)$ to be the set of pairs $\hl$ where $H$ is a connected subgroup of $G$ and $\LL$ is a multiplicative local system on $H$. We write $\mathscr{P}_{norm}(G) \subset \mathscr{P}(G)$ to be the subset of pairs $\hl \in \mathscr{P}(G)$ such that $H$ is normal in $G$
\end{definition}

\begin{definition}
    We can define a partial order on $\mathscr{P}(G)$ by $(H_1, \LL_1) \le (H_2, \LL_2)$ if $H_1 \subset H_2$ and $\LL_2 |_{H_1} \cong \LL_1$.
\end{definition}

\begin{definition}
    Given $M \in \dg$, $M \ne 0$. We say $(A, \NN) \in \mathscr{P}(G)$ is \emph{compatible with $M$} if $\iota^{A\subset G}_! \NN * M \ne 0$, where $\iota^{A \subset G}: A \hookrightarrow G$ is the inclusion. 
\end{definition}

\begin{remark}
    For any $M \in \dgg$ with $M\ne 0$, there exists a pair $\hl \in \mathscr{P}_{norm}(G)$ that is compatible with $M$. For example, take $H = \{1\}$.
\end{remark}

The next proposition allows us to construct admissible pairs from this partial order on $\mathscr{P}(G)$:

\begin{prop}[{\cite[Proposition 4.4]{foundations}}]\label{ad pair prop}
    Let $G$ be a unipotent group over $k$ and $M \in \dg$, $M \ne 0$. Suppose that $\hl \in \mathscr{P}_{norm}(G)$ is maximal among all pairs $(A,\NN) \in \mathscr{P}_{norm}(G)$ that are compatible with $M$. If $\LL$ is invariant under the conjugation action of $G$, then the pair $(H,\LL)$ is admissible for $G$.
\end{prop}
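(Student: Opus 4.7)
The plan is to verify the three defining conditions of admissibility in turn, exploiting the $G$-invariance of $\LL$ to trivialize the third condition and using the maximality of $(H,\LL)$ inside $\mathscr{P}_{norm}(G)$ to force the first two by contradiction.

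Since $H$ is normal in $G$ and $\LL$ is $G$-invariant, every element of $G(k)$ stabilizes the isomorphism class $[\LL]$, so $G' := N_G(H,\LL) = G$ and $G'^\circ = G^\circ$. Hence $G(k) \setminus G'(k) = \emptyset$, and condition (3) holds vacuously. For conditions (1) and (2), the uniform strategy is: if either fails, one exhibits a connected normal subgroup $H \subsetneq H' \subseteq G^\circ$ of $G$ together with a (necessarily $G$-invariant) multiplicative local system $\LL'$ on $H'$ with $\LL'|_H \cong \LL$, and then verifies that $(H',\LL')$ is compatible with $M$, contradicting the maximality of $(H,\LL)$. For (1), if $G^\circ/H$ is non-commutative, one takes $H'$ to be the preimage under $G^\circ \twoheadrightarrow G^\circ/H$ of a minimal nontrivial connected normal subgroup lying in the commutator subgroup; the $G$-invariance of $\LL$ makes the cohomological obstruction to extending $\LL$ across this commutator quotient vanish. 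For (2), if the identity component of $\kernel(\varphi_\LL)$ is positive-dimensional, its preimage $H'$ in $G^\circ$ is connected and normal in $G$, and the defining property of $\varphi_\LL$ in \S3.3 of \cite{foundations} says precisely that vanishing of $\varphi_\LL$ on a subgroup is the obstruction to extending $\LL$ there, so $\LL'$ exists.

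The technical crux is the compatibility claim $\iota^{H' \subset G}_! \LL' * M \ne 0$. The bridge is the projection formula: since $\LL'|_H \cong \LL$, the sheaf $\iota^{H \subset H'}_!\LL$ on $H'$ is isomorphic to $\LL' \otimes \iota^{H \subset H'}_! \qlb$, and pushing forward to $G$ expresses $\iota^{H \subset G}_!\LL$ in terms of $\LL'$ and $\iota^{H \subset H'}_! \qlb$. Convolving with $M$ then translates the hypothesis $\iota^{H \subset G}_! \LL * M \ne 0$ into a nonvanishing statement involving $\iota^{H' \subset G}_! \LL'$, with the complex $\iota^{H \subset H'}_! \qlb$ acting as an intermediary whose semisimple constituents must be tracked. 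I expect this propagation step to be the main obstacle, because it requires showing that the nonvanishing survives after the $\iota^{H \subset H'}_! \qlb$ factor is stripped off; the $G$-invariance of $\LL'$ ensures that $\iota^{H' \subset G}_! \LL'$ behaves like a (weak) idempotent in $\dg$, which is what ultimately makes the compatibility transfer go through. Once compatibility is established, the contradiction with maximality of $(H,\LL)$ is immediate, yielding all three admissibility conditions.
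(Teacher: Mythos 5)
The paper you are working from does not prove this proposition at all; it is quoted as a black box from Boyarchenko--Drinfeld (\cite[Proposition 4.4]{foundations}), so there is no in-paper argument to compare your attempt against. Judging the attempt on its own terms: your handling of condition (3) is correct and essentially forced --- $H$ normal in $G$ together with $G$-invariance of $\LL$ gives $N_G(H,\LL)=G$, making (3) vacuous. The overall shape of the argument for (1) and (2) --- produce a strictly larger pair $(H',\LL')\in\mathscr{P}_{norm}(G)$ still compatible with $M$ and contradict maximality --- is also the correct strategy and matches the structure of the proof in \cite{foundations}.

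However, the proposal leaves the two genuinely hard steps open, and you flag both yourself. First, the existence of a multiplicative extension $\LL'$ on $H'$ with $\LL'|_H\cong\LL$ is asserted but not established: multiplicative local systems on $H'$ factor through $(H')^{ab}$, so one must first show that $\LL$ is trivial on $H\cap[H',H']$ before an extension can even be contemplated, and the remark that ``the cohomological obstruction vanishes'' because $\LL$ is $G$-invariant does not obviously address this. Second --- and this is the real crux, which you explicitly defer --- the compatibility transfer from $\iota^{H\subset G}_!\LL * M \ne 0$ to $\iota^{H'\subset G}_!\LL' * M \ne 0$ does not follow from the projection-formula identity $\iota^{H\subset H'}_!\LL \cong \LL'\otimes\iota^{H\subset H'}_!\qlb$ alone; stripping off the $\iota^{H\subset H'}_!\qlb$ factor after convolving with $M$ is exactly the nontrivial content, and the claim that $G$-invariance makes $\iota^{H'\subset G}_!\LL'$ ``behave like a weak idempotent'' is an expectation rather than an argument. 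As written, this is an outline with its two critical steps marked ``to be filled in,'' not a proof. A minor structural point as well: $\varphi_\LL$ is only defined once $G'^\circ/H$ is known to be commutative, so condition (2) must be treated only after (1) has been secured, not in parallel with it.
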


\subsection{Twists in the category $\dgg$}
The category $(\dgg, *)$ has a natural ribbon structure. We consider  unipotent groups $G$, so we can use Definition \ref{dgx def} to define the ribbon structure. In particular, twists in $\dgg$ play an important role in our proof of Theorem \ref{thm}. We will only define twists; for a complete description of the ribbon structure of $\dgg$, see \S A.5 in \cite{foundations}.

\begin{definition}[twists on $\dgg$]
    Let $c:G \times G \rightarrow G$ be the conjugation morphism $c(g,h)=ghg^{-1}$, let $\pr_2: G \times G \rightarrow G$ denote the second projection, and $\Delta: G \times G \rightarrow G$ the diagonal morphism. Then $c \circ \Delta = \id_G = \pr_2 \circ \Delta$. For each $M \in \dgg$, the $G$-equivariant structure on $M$ yields and isomorphism $\pr_2^*M \xrightarrow{\sim} c^*M$. Pulling this back by $\Delta$, we get an isomorphism $\theta_M = M= \Delta^*\pr_2^*M \xrightarrow{\sim} \Delta^*c^*M=M$. We call $\theta_M$, the \emph{twist of automorphism of $M$}.

    Let $\theta$ be the collection $\{\theta_M | M \in \dgg\}$. Then, $\theta$ is an automorphism of the identity functor in $\dgg$. We call $\theta$ the \emph{twist automorphism}.
\end{definition}

Let $e\in\dgg$ be a closed idempotent. The Hecke subcategory $(e\dgg, *)$ inherits a ribbon structure from $(\dgg, *)$. See \S A.6 in \cite{foundations} for a general description of this for Hecke subcategories of Grothendieck-Verdier categories. We use Lemma A.53 in \cite{foundations}, which applied to our setting gives: 

\begin{lemma}
    \begin{itemize}
        \item[(a)] Suppose $\psi$ is a pivotal structure on $\dgg$ and $\tilde \psi$ is the induced pivotal structure on $e\dgg$ (see Lemma A.51 in \cite{foundations}). If $ \theta$ and $\tilde \theta$ are the twists on $\dgg$ and $e\dgg$ corresponding to $\psi$ and $\tilde \psi$, respectively, then $\tilde \theta = \theta|_{e\dgg}$.
        \item[(b)] In the situation of (a), if $\theta$ is a ribbon structure on $\dgg$, then $\tilde \theta$ is a ribbon structure on $e\dgg$.
    \end{itemize}
\end{lemma}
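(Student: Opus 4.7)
The plan is to recognize this lemma as a direct specialization of Lemma A.53 in \cite{foundations}, which is formulated for arbitrary Grothendieck--Verdier categories equipped with a closed idempotent. The task therefore reduces to verifying that the hypotheses of that lemma are met in our setting and translating its conclusion into the language of $\dgg$ and $e\dgg$.

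First I would invoke from the appendix of \cite{foundations} that $(\dgg, *)$ is a Grothendieck--Verdier category with unit $\1$, and that any $e \in \dgg$ satisfying our working definition of a closed idempotent is a closed idempotent in the abstract Grothendieck--Verdier sense as well. Once this identification is in place, both (a) and (b) become direct applications of the corresponding parts of Lemma A.53.

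For (a), the key observation is that in a Grothendieck--Verdier category the twist $\theta_M$ associated to a pivotal structure $\psi$ is determined by a canonical formula involving the (co)evaluation morphisms built from $\psi$. Applying this formula to $M \in e\dgg$ using the induced pivotal structure $\tilde\psi$ from Lemma A.51 in \cite{foundations}, and comparing with the formula computed in the ambient category $\dgg$, one sees that the two expressions give the same automorphism of $M$: the (co)evaluation morphisms governing $\tilde\psi$ are obtained from those of $\psi$ by insertion of $e$, and once $M \in e\dgg$ (so that $e * M \cong M$) this insertion collapses and returns the ambient $\theta_M$. Part (b) then follows quickly: the ribbon axiom $\theta_{M^*} \cong (\theta_M)^*$ descends from $\dgg$ to $e\dgg$ because, by (a), both the twist and the induced duality on $e\dgg$ are inherited verbatim from the ambient structure.

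The main obstacle is bookkeeping rather than substance: one must carefully track how the pivotal structure constructed in Lemma A.51 and the twist construction in a Grothendieck--Verdier category interact when passing from $\dgg$ to the Hecke subcategory $e\dgg$, and verify that $e$ genuinely plays the role of the unit in the restricted monoidal category. Once this translation is made, no new ingredient beyond what \cite{foundations} already provides is required.
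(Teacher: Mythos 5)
Your proposal matches the paper's approach exactly: the paper likewise treats this lemma as a direct specialization of Lemma A.53 in \cite{foundations}, applied to the Grothendieck--Verdier category $(\dgg, *)$ and its Hecke subcategory $e\dgg$, with no further argument given. The additional mechanism you sketch about (co)evaluation morphisms collapsing when $e * M \cong M$ is reasonable supporting intuition but not part of the paper's (one-line) justification.
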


The following Lemma gives us the structure of twists for easy unipotent groups:

\begin{lemma}[{\cite[Lemma 4.16]{characters}}]\label{easy twists}
    Let $G$ be an easy unipotent group over a field $k$ of characteristic $p>0$. Then, for every object $M \in \dgg$, its twist automorphism $\theta_M$ is trivial. 
\end{lemma}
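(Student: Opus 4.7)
The plan is to compute the equivariance isomorphism $\phi \colon c^*M \xrightarrow{\sim} \pr_2^*M$ on a subscheme of $G \times G$ containing the diagonal $\Delta(G)$ under the easiness hypothesis, show it is the identity there, and conclude $\theta_M = \Delta^*\phi^{-1} = \id_M$ by pullback.

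First I would fix $g \in G(\kb)$ and restrict $\phi$ to $Z(g)^\circ \times \{g\} \subset G \times G$. Because $c(h,g) = hgh^{-1} = g$ for every $h \in Z(g)$ and $\pr_2(h,g) = g$, both morphisms restrict to the constant map with value $g$, so $\phi|_{Z(g)^\circ \times \{g\}}$ is an automorphism of the constant complex $\pi_g^*M_g$ in $\mathscr{D}(Z(g)^\circ)$, where $\pi_g\colon Z(g)^\circ \to \mathrm{pt}$. Since $Z(g)^\circ$ is a closed connected subgroup of the unipotent group $G$, it is itself a connected unipotent group over $\fpb$ with underlying variety $\mathbb{A}^n$, so $H^*(Z(g)^\circ,\qlb) = \qlb$ concentrated in degree zero. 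The projection formula then gives $\text{End}_{\mathscr{D}(Z(g)^\circ)}(\pi_g^*M_g) \cong \text{End}_{\mathscr{D}(\mathrm{pt})}(M_g)$, so the automorphism $\phi|_{Z(g)^\circ \times \{g\}}$ is determined by its value at any single point. Evaluating the cocycle condition of Definition \ref{dgx def} at $(e,e,g)$ gives $\phi|_{(e,g)} \circ \phi|_{(e,g)} = \phi|_{(e,g)}$, which forces $\phi|_{(e,g)} = \id_{M_g}$; therefore $\phi|_{Z(g)^\circ \times \{g\}} = \id$.

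Easiness of $G$ gives $g \in Z(g)^\circ$, so $(g,g) \in Z(g)^\circ \times \{g\}$. Pulling back the identity along $\{(g,g)\} \hookrightarrow Z(g)^\circ \times \{g\}$ yields $\phi|_{(g,g)} = \id_{M_g}$, i.e., every stalk of $\theta_M$ is the identity. To promote this pointwise triviality to $\theta_M = \id_M$ in $\dgg$, I would stratify $G$ by centralizer type so that on each locally closed stratum $S$ the subscheme $V_S := \{(h,g) \in G \times S : h \in Z(g)^\circ\}$ is smooth over $S$ with geometrically connected unipotent fibers, and run the argument above in families: the relative $\ell$-acyclicity of $V_S \to S$ together with the cocycle-derived triviality on the identity section $\{e\}\times S$ gives $\phi|_{V_S} = \id$ in $\mathscr{D}(V_S)$, and the containment $\Delta(S)\subset V_S$ then yields $\theta_M|_S = \id$. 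Gluing over strata gives $\theta_M = \id$ in $\mathscr{D}(G)$, hence in $\dgg$.

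The main obstacle I anticipate is the stratified globalization: the subscheme $V = \bigsqcup_g Z(g)^\circ \times \{g\}$ need not be a fibration over $G$ with constant-dimensional fibers, so the relative $\ell$-acyclicity must be arranged via a well-chosen stratification. The pointwise argument is the conceptual core and follows essentially from the cocycle condition combined with the $\ell$-acyclicity of connected unipotent groups in characteristic $p\neq \ell$.
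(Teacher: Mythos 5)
Your pointwise computation is the right conceptual core and, I believe, matches the core of Boyarchenko's argument: restricting $\phi$ to $Z(g)^\circ \times \{g\}$ (on which $c$ and $\pr_2$ agree), using $\ell$-acyclicity of the connected unipotent group $Z(g)^\circ$ to reduce to a single point, normalizing at $(e,g)$ via the cocycle condition, and then invoking $g \in Z(g)^\circ$ to get $\theta_{M,g} = \id_{M_g}$. That part is correct and is exactly where easiness enters.

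The gap is in the globalization. Your plan is to show $\theta_M|_S = \id$ stratum by stratum and then ``glue over strata,'' but morphisms in $\mathscr{D}(G)$ are not determined by their restrictions to the pieces of a locally closed decomposition (nor by their stalks). Concretely, if $j: S_1 \hookrightarrow G$ is open with closed complement $i: S_2 \hookrightarrow G$, an endomorphism $\theta$ of $M$ with $j^*\theta = \id$ and $i^*\theta = \id$ can still satisfy $\theta \neq \id$: the difference $\theta - \id$ may be a nonzero morphism factoring through $i_*i^*M \to j_!j^*M[1]$. A toy example already occurs for $M = \qlb_X[2]\oplus i_*\qlb_z$ on a curve $X$ with a nonzero map $i_*\qlb_z \to \qlb_X[2]$ coming from local cohomology: the resulting automorphism $\id + (\text{off-diagonal})$ has identity restriction to both strata and identity stalk at every point, yet is not the identity. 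So the step ``Gluing over strata gives $\theta_M = \id$'' does not follow, and the stratified family argument, even if carried out, would not close the proof.

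The fix is not to glue but to use the Postnikov filtration together with $p$-torsion of the twist, exactly as in the proof of \Cref{triv twists} in this paper. Your pointwise computation does legitimately give $\theta_{\HH^i(M)} = \id_{\HH^i(M)}$ for every cohomology sheaf $\HH^i(M)$, since a morphism of constructible \emph{sheaves} (as opposed to complexes) is zero iff all its stalks are. Hence $\theta_M$ is a unipotent automorphism of $M$. On the other hand, because $G$ is unipotent of exponent $p^n$, one has $(\theta_M)^{p^n} = \id_M$. A unipotent automorphism of finite order in a $\qlb$-linear category is the identity, since $\qlb$ has characteristic zero. This closes the argument without any gluing, and also dispenses with the need to arrange a stratification with relatively $\ell$-acyclic fibers; the single-point computation at each geometric $g$ already suffices.
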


In \Cref{2.2}, we consider the twist automorphism for unipotent groups with trivial $\L$-packets. 

\subsection{Sheaf-function correspondence}
Let $M \in \dgg$ such that $F^*M \cong M$, then we can define the ``trace of Frobenius" to be the map
$$t_M:G^F \longrightarrow  \qlb$$
$$g \mapsto \sum_i (-1)^i\mathrm{Tr}(F; H^i_c(G, M)_g).$$

We have that $t_M$ is a conjugation-invariant function of $G^F$. Note that we will denote the trace of Frobenius with respect to $F^m$ by $t_{M,m}$. Boyarchenko gives a relationship between the trace of Frobenius of $F$-stable character sheaves and the space $C(G^F/\sim)$ in \cite{chsheaves}, which is used to prove the ``only if" direction of \nameref{conj}. We also make use of this to prove the ``if" direction. Let $CS(G)^F$ denote the $F$-invariant character sheaves on $G$.

\begin{theorem}[{\cite[Theorem 1.8(b)]{chsheaves}}]\label{basis}
    Let $G$ be a connected unipotent group. The functions
    $$\{t_M : G^F \to \qlb \;| \;M\in CS(G)^F\} $$
    form a basis for the space $C(G^F/ \sim)$ which is orthonormal with respect to the inner product 
    $$\langle f_1 \; | \;f_2\rangle = \sum_{g \in G^F}f_1(g) \overline{f_2(g)}.$$
\end{theorem}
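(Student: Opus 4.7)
The plan is to establish the orthonormality and spanning assertions separately: the former via the monoidal structure of $\dgg$ together with Verdier duality, the latter via a dimension count using the classification of character sheaves.

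For orthonormality, fix $F$-stable character sheaves $M \in \L(e)$ and $N \in \L(e')$ with chosen Weil structures $F^*M \xrightarrow{\sim} M$ and $F^*N \xrightarrow{\sim} N$. The main ingredients are the convolution formula
\[
t_{M * N}(g) \;=\; \sum_{h \in G^F} t_M(h)\, t_N(h^{-1}g),
\]
and the sheaf-function translation of Verdier duality $\overline{t_N(g)} = t_{DN}(g)$ for pure weight-zero complexes, where $D$ denotes the dualizing functor. Combining these (and an inversion pullback), one rewrites
\[
\langle t_M \mid t_N \rangle \;=\; \sum_{g \in G^F} t_M(g)\, \overline{t_N(g)}
\]
as the trace of Frobenius on the stalk at $1$ of a convolution $M * N'$, where $N'$ is built from $N$ via duality and inversion. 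If $e \ne e'$, the minimality of the two closed idempotents forces $e * e' \cong 0$, so the convolution vanishes and the inner product is zero. If $e = e'$, Theorem \ref{edgg cat thm}(a) together with Schur's lemma, applied inside the semisimple abelian category $\mathscr{M}_e^{perv}$, reduces the computation to a trace on $\Hom(N,M)$, yielding $0$ when $M \not\cong N$ and a scalar when $M \cong N$; a careful normalization of the Weil structure forces this scalar to equal $1$.

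For spanning, observe that $\dim C(G^F/\sim)$ is the number of $F$-conjugacy classes of $G^F$, and that orthonormal vectors are automatically linearly independent, so it suffices to match cardinalities. By Theorem \ref{edgg cat thm} the set of character sheaves on $G$ is finite, and $F$ permutes both the minimal closed idempotents and, within each $F$-stable packet, the finite set of simple objects of $\mathscr{M}_e^{perv}$. A Lang--Steinberg count then identifies the total number of isomorphism classes of $F$-stable character sheaves with $\sum_e |\L(e)^F|$, summed over $F$-stable minimal closed idempotents $e$. Matching this total with $|G^F/\sim|$ can then be extracted from the classification of minimal closed idempotents by admissible pairs $\hl$ proved in \cite{foundations}, descended to $F$-fixed points.

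The step I expect to be hardest is the precise normalization: one must pin down a canonical Weil structure $F^*M \xrightarrow{\sim} M$ for each $F$-stable character sheaf so that the self-inner-product equals exactly $1$, rather than merely a nonzero scalar. This requires choosing a canonical Frobenius action on each minimal closed idempotent $e$ (using that $F^*e \cong e$ essentially uniquely) and then propagating it to each simple summand of $\mathscr{M}_e^{perv}$ compatibly with induction from admissible pairs. A secondary difficulty is the combinatorial matching in the spanning step, which relies on the full admissible-pair classification in the $F$-equivariant setting.
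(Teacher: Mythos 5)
First, note that the paper you are working from does not prove this statement at all: it is imported verbatim as \cite[Theorem 1.8(b)]{chsheaves}, and its proof there is a substantial piece of work, so your sketch is competing with Boyarchenko's full argument rather than with anything internal to this paper. Measured against that, your proposal has two genuine gaps, located exactly where the real content of the theorem lies. The first is the spanning step. Saying that the number of isomorphism classes of $F$-stable character sheaves ``can be extracted from the classification of minimal closed idempotents by admissible pairs'' is not an argument: the equality of that number with $|G^F/\sim|$ is essentially the main difficulty of the theorem, and it is not a formal Lang--Steinberg count. In \cite{chsheaves} it is obtained by relating the functions $t_M$ to the irreducible characters of $G^F$ (the $\L$-packet decomposition on the character side, built on the orbit method/induction from the normalizer of an admissible pair), and showing the transition matrix between the two families is invertible, indeed unitary; orthonormality and completeness then come together from orthonormality of irreducible characters. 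Your dimension count presupposes precisely the statement you are trying to prove.

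The second gap is in the orthonormality computation. The passage from $\langle t_M \mid t_N\rangle$ to a Frobenius trace on the stalk at $1$ of a convolution is fine (Grothendieck--Lefschetz plus the dictionary for $\mu_!$), and distinct minimal idempotents do satisfy $e*e'\cong 0$, so cross-packet orthogonality is plausible along your lines. But within a packet, the stalk at $1$ of $M * N'$ receives contributions from the stalks at $1$ of \emph{all} character sheaves in $\L(e)$ occurring in the decomposition, with their Frobenius actions, not just from the multiplicity of $e$; reducing the trace to $\Hom(N,M)$ is not a direct application of Schur's lemma in $\mathscr{M}_e^{perv}$ and needs the actual structure theory (purity, weights, and the behaviour of the unit object under the duality). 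Relatedly, the identity $\overline{t_N(g)}=t_{DN}(g)$ requires purity and a compatible choice of Weil structure, and the claim that ``a careful normalization forces the scalar to equal $1$'' is exactly the hard normalization you defer; without it you get an orthogonal family with unknown norms, which is strictly weaker than the theorem as stated. So the proposal is a reasonable heuristic outline, but both the completeness and the norm-one statements remain unproved, and the route actually used in \cite{chsheaves} (through characters of $G^F$ rather than a purely geometric inner-product computation) is genuinely different from the one you sketch.
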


\section{Main Results}

\subsection{Asai twisting operator on easy algebraic groups}

For this section, let $G$ be a connected algebraic group over $\fqb$. Fix a Frobenius $F: G \rightarrow G$. Note that connectedness is required for the definition of the Asai twisting operator given in \Cref{asai def}.

\begin{lemma}\label{twisted cent}
    The Asai twisting operator is trivial on $C(G^F/ \sim)$ if and only if every $g \in G^F$ can be written as $z^{-1}F(z)$, where $z \in Z(g)$. 
\end{lemma}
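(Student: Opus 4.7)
My plan is to identify triviality of $\Theta = N_1^*$ on $C(G^F/\sim)$ with the statement that the map $N_1$ itself is the identity on $G^F/\sim$, and then to convert that statement into the claimed centralizer condition by a direct calculation using Lang's theorem.

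For the reduction, $\Theta$ acts on class functions by precomposition with $N_1$, and the indicator functions of conjugacy classes form a basis of $C(G^F/\sim)$, so $\Theta$ is the identity if and only if $N_1(g) = g$ in $G^F/\sim$ for every $g \in G^F$. To unpack $N_1(g)$, I fix $g$ and invoke Lang's theorem (valid because $G$ is connected) to write $g = x F(x)^{-1}$ for some $x \in \gkb$. Then $F(x)^{-1} = x^{-1} g$, so $N_1(g)$ is represented by $F(x)^{-1} x = x^{-1} g x$. In particular $N_1(g)$ is always $\gkb$-conjugate to $g$; the real question is whether the conjugating element can be chosen inside $G^F$.

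For the forward direction, the hypothesis $N_1(g) \sim g$ in $G^F$ produces $y \in G^F$ with $y^{-1} g y = x^{-1} g x$, which is equivalent to $xy^{-1} \in Z(g)$. Setting $z := xy^{-1}$ and using $F(y) = y$ gives
\[
g = x F(x)^{-1} = (zy)\, F(zy)^{-1} = zy \cdot y^{-1} F(z)^{-1} = z F(z)^{-1},
\]
and replacing $z$ by $z^{-1} \in Z(g)$ yields $g = z^{-1} F(z)$ as desired. Conversely, given $g = z^{-1} F(z)$ with $z \in Z(g)$, I take $x := z^{-1}$; then $x F(x)^{-1} = z^{-1} F(z) = g$, and $N_1(g)$ is represented by $F(x)^{-1} x = F(z) z^{-1} = z g z^{-1} = g$, so $N_1(g)$ equals $g$ on the nose.

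I do not expect any deep obstacle; the main subtlety is purely bookkeeping to distinguish $\gkb$-conjugation (which is automatic from the Lang decomposition) from $G^F$-conjugation (which is the actual condition being tested) and to see that the latter is exactly the condition that $g$ lie in the image of the Lang map restricted to its own centralizer. One should also briefly verify along the way that $N_1$ is well-defined on conjugacy classes independent of the choice of $x$, but this is routine.
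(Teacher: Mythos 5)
Your proof is correct and follows essentially the same route as the paper's: identify triviality of $\Theta = N_1^*$ with $N_1$ inducing the identity on $G^F/\sim$, use Lang's theorem to write $g = xF(x)^{-1}$ and observe $N_1(g) = x^{-1}gx$, then translate $G^F$-conjugacy of $g$ and $x^{-1}gx$ into the existence of a suitable element of $Z(g)$. Your bookkeeping in both directions (in particular the explicit substitutions $z = xy^{-1}$ and $x = z^{-1}$) is clean and, if anything, slightly more careful than the paper's.
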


\begin{proof}
    Since $G$ is connected, the Asai twisting operator is equal to $\mathrm{N}_{F/F}^*$. Thus, $\Theta_F$ is trivial if and only if the norm map
    $$\mathrm{N}_{F/F}:(G^F / \sim ) \rightarrow (G^F / \sim)$$ 
    $$g = xF(x)^{-1} \mapsto F(x)^{-1}x$$
    is constant on $G^{F}$-conjugacy classes. We also have that all $y\in G$ such that $y^{-1}F(x)^{-1}xy = xF(x)^{-1}$ are of the form $y=xz$, where $z \in Z(g)$. 
    
    Now suppose the Asai twisting operator is trivial. Then, $\mathrm{N}_{F/F}$ is constant on $G^{F}$-conjugacy classes, so we must have $zx \in G^{F}$ for some $z\in Z(g)$. Hence $xz=F(xz)$, so $z^{-1}F(z)=xF(x)^{-1}=g$.

    Conversely, suppose that for all $g\in G^F$, we have $g=z^{-1}F(z)$ for some $z \in Z(g)$. Then, $z^{-1}F(z)=xF(x)^{-1}$, which gives $xz=F(x)F(z)=F(xz)$. Hence, $xz \in G^F$ and $\mathrm{N}_{F/F}$ is constant on $G^F$ conjugacy classes.
\end{proof}

\begin{theorem}\label{asai prop}
   The Asai twisting operator is trivial on $C(G^{F^m}/\sim)$ for all $m \in \Z_{>0}$ if and only if $G$ is easy.
\end{theorem}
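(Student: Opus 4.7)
The plan is to apply \Cref{twisted cent} to each Frobenius $F^m$ individually: this reduces the statement to the equivalent condition that every $g \in G^{F^m}$ can be written as $g = z^{-1}F^m(z)$ for some $z \in Z(g)$. Proving \Cref{asai prop} then amounts to matching this ``surjectivity modulo a centralizer'' condition, holding for every $m$, against the definition of easiness.

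For the ``$\Leftarrow$'' direction, I would fix $m \in \Z_{>0}$ and $g \in G^{F^m}$. Easiness yields $g \in Z(g)^\circ$, and because $g$ is $F^m$-fixed, both $Z(g)$ and $Z(g)^\circ$ are $F^m$-stable, with $F^m$ restricting to a Frobenius endomorphism of the connected algebraic group $Z(g)^\circ$. Lang's theorem then produces some $z \in Z(g)^\circ \subseteq Z(g)$ with $g = z^{-1}F^m(z)$, and \Cref{twisted cent} applied to $F^m$ concludes.

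For the ``$\Rightarrow$'' direction, I would fix an arbitrary $g \in G(\fqb)$ and aim to show $g \in Z(g)^\circ$. The strategy is to pick $m$ satisfying both (i) $g \in G^{F^m}$ and (ii) $F^m$ acts trivially on the finite component group $\pi_0(Z(g)) := Z(g)/Z(g)^\circ$. Condition (i) holds for some $m_0$ because $g$ has coordinates in a finite subfield of $\fqb$; with that $m_0$ fixed, $Z(g)$ is defined over $\F_{q^{m_0}}$, so $\pi_0(Z(g))$ is a finite étale group scheme over $\F_{q^{m_0}}$ whose finite set of geometric points carries an action of $\mathrm{Gal}(\fqb/\F_{q^{m_0}})$ factoring through a finite quotient, so some multiple $m$ of $m_0$ also satisfies (ii). Applying the hypothesis for this $m$ and \Cref{twisted cent}, there is $z \in Z(g)$ with $g = z^{-1}F^m(z)$; projecting to $\pi_0(Z(g))$ gives $\bar g = \bar z^{-1} F^m(\bar z) = \bar z^{-1}\bar z = 1$, so $g \in Z(g)^\circ$, as desired.

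The only genuine subtlety is the ``$\Rightarrow$'' direction's choice of $m$: one must know that $F^m$ can be arranged to act trivially on $\pi_0(Z(g))$. This is essentially formal once the finiteness of $\pi_0(Z(g))$ and the existence of a finite field of definition of $Z(g)$ are invoked. Beyond it, the argument uses only \Cref{twisted cent}, Lang's theorem, and the fact that finite étale group schemes over a field are split by a finite extension.
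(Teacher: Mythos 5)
Your proof is correct and takes essentially the same approach as the paper's: reduce via \Cref{twisted cent}, apply Lang's theorem to $Z(g)^\circ$ for the ``easy $\Rightarrow$ trivial Asai'' direction, and use the finiteness of $\pi_0(Z(g))$ to choose a suitable $m$ for the converse. The one cosmetic difference is that where the paper invokes the equality $H^1(F^m, Z(g)) = H^1(F^m, \pi_0(Z(g)))$, you simply project $g = z^{-1}F^m(z)$ along $Z(g) \to \pi_0(Z(g))$ --- a small streamlining, since only the existence of the $F^m$-equivariant quotient map (not the full bijection on twisted conjugacy classes) is actually needed.
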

    
\begin{proof}
    By \Cref{twisted cent}, this is equivalent to proving that $G$ is easy if and only if for any $m \in \Z_{>0}$ every $g \in G^{F^m}$ can be written as $z^{-1}F^m(z)$ for some $z \in Z(g)^\circ$.

    $(\Rightarrow)$: If $G$ is easy, then for every $g \in G(k)$, we have $g \in Z(g)^\circ$. Then, by Lang's Theorem, $g=z^{-1}F^m(z)$ for some $z \in Z(g)$.

    $(\Leftarrow)$: We define $F^m$- twisted conjugation by $h$ on $g$ by $F^m(h)^{-1}gh$. Let $H^1(F^m, Z(g))$ denote the $F^m$- twisted conjugacy classes in $Z(g)$. By Lang's Theorem, we can write any $z_1 \in Z(g)^\circ$ as $h^{-1}F^m(h)$, with $h \in Z(g)^\circ$. Then for $z \in Z(g)$ and $z_1 \in Z(g)^\circ$, the map $zz_1 = zh^{-1}F^m(h) \mapsto h^{-1}zF^m(h)$ induces the equality $H^1(F^m, \pi_0(Z(g)))=H^1(F^m, Z(g))$.
    
    Suppose $g \in G^F$. By \Cref{twisted cent}, $g \sim 1 \in H^1(F^m,Z(g))=H^1(F^m, \pi_0(Z(g)))$ for all $m \in \Z_{>0}$. Since $\pi_0(Z(g))$ is finite, there exists an $m \in \Z_{>0}$ such that $F^m=\id$. Hence, $\bar{g}=\bar{1} \in \pi_0(Z(g))=Z(g)/Z(g)^\circ$, which gives us $g \in Z(g)^\circ$, as desired. To show this holds for $g \in G^{F^k}$, substitute $F$ with $F^k$.
\end{proof}

\begin{remark}
   Since all easy algebraic groups are connected, the connectedness assumption is only required for the ``only if" direction.
\end{remark}

Note that the proof of \Cref{asai prop} does not depend on the choice of $F$. This gives us the following corollary:

\begin{corollary}
    Suppose there exists a Frobenius $F$ such that $\Theta_{F^m}$ is trivial for all $m$. Then, for any Frobenius $F'$, $\Theta_{F'}$ is trivial. 
\end{corollary}

\subsection{Twists on unipotent groups with trivial $\L$-packets} \label{2.2}

From now on, let $G$ be a unipotent group over $\Fq$, equipped with an $\Fq$ rational structure defined by a Frobenius, $F:G \rightarrow G$. The proof of \Cref{thm} relies on the structure of twists in the category $\dgg$ and their relationship to the Asai twisting operator.  
\begin{lemma}\label{triv twists}
    If the $\L$-packet of character sheaves corresponding to $e$ is a singleton, then the twists in the Hecke subcategory $e\dgg$ are trivial.
\end{lemma}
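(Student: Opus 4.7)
The plan is to combine the structural description of $e\dgg$ in \Cref{edgg cat thm} with the basic axioms of a ribbon category. Under the assumption that $\L(e)$ is a singleton, part (a) of \Cref{edgg cat thm} provides a unique simple object $M_0\in\mathscr{M}_e^{perv}$ up to isomorphism, and every object of $\mathscr{M}_e^{perv}$ is a finite direct sum of copies of $M_0$. Since $\mathscr{M}_e^{perv}$ is $\qlb$-linear and semisimple, Schur's lemma gives $\End(M_0)=\qlb$, and so the natural automorphism $\tilde\theta$ of the identity functor of $e\dgg$ must satisfy $\tilde\theta_{M_0}=\lambda\cdot\id_{M_0}$ for some scalar $\lambda\in\qlb^{\times}$. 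The rest of the proof amounts to pinning down $\lambda=1$ and then propagating the equality across $e\dgg$.

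The key step is to evaluate $\tilde\theta$ at the monoidal unit $e$ of $(e\dgg,\ast)$. By \Cref{edgg cat thm}(b), $e[-n_e]\in\mathscr{M}_e^{perv}$ is nonzero, so $e[-n_e]\cong M_0^{\oplus k}$ for some $k\ge 1$, and hence $e\cong M_0[n_e]^{\oplus k}$. Unwinding the construction of the twist from the $G$-equivariant structure, $\tilde\theta$ commutes with the shift functor in the sense that $\tilde\theta_{N[j]}=\tilde\theta_N[j]$ for every $N\in e\dgg$ and $j\in\Z$, because the pullbacks along $\Delta$, $c$ and $\pr_2$ used in its definition all commute with shifts. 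Combining this with the fact that $\tilde\theta$ is block-diagonal on direct sums gives $\tilde\theta_e=\lambda\cdot\id_e$. On the other hand, the ribbon axioms force the twist on the monoidal unit of any ribbon category to be the identity, so $\tilde\theta_e=\id_e$. Comparing the two expressions for $\tilde\theta_e$ gives $\lambda=1$, so $\tilde\theta_{M_0}=\id_{M_0}$.

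Finally, I would extend this to all of $e\dgg$ using the equivalence $e\dgg\simeq D^b(\mathscr{M}_e^{perv})$ of \Cref{edgg cat thm}(c). Semisimplicity of $\mathscr{M}_e^{perv}$ implies that $\Ext^{>0}$ vanishes in the heart, so every object $M\in e\dgg$ splits canonically as $\bigoplus_j H^j(M)[-j]$ where each perverse cohomology $H^j(M)$ is a finite direct sum of copies of $M_0$. Since $\tilde\theta_{M_0}=\id$, and $\tilde\theta$ is compatible with shifts and direct sums, this splitting gives $\tilde\theta_M=\id_M$ for every $M\in e\dgg$.

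The main obstacle I expect is verifying the shift compatibility $\tilde\theta_{N[j]}=\tilde\theta_N[j]$: this is not a formal consequence of being a natural automorphism of the identity functor on an abstract triangulated category, and has to be checked against the concrete definition of the twist in terms of pulling back the $G$-equivariance along $\Delta$. Once that compatibility is in hand, the remainder of the argument is essentially Schur's lemma plus the ribbon-category axiom that the twist on the monoidal unit is the identity.
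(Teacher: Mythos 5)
Your proof is correct and reaches the statement by a genuinely different route from the paper. Both arguments begin from the equivalence $e\dgg\simeq D^b(\mathscr{M}_e^{perv})$ of \Cref{edgg cat thm} and the resulting observation that every object of $e\dgg$ is a finite direct sum of shifts of $e$, and both use that $\theta_e=\id_e$ since $e$ is the unit for convolution. The paper, however, extracts from this only the weaker statement that $\theta_M$ acts as the identity on each cohomology object of $M$, hence is a \emph{unipotent} automorphism; it then finishes by invoking the extra fact that $(\theta_M)^{p^n}=\id_M$ because the unipotent group $G$ has exponent $p^n$, so that a unipotent automorphism of finite order in characteristic zero must be the identity. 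You instead use the full strength of the decomposition of $M$ into shifts of $M_0\cong e[-n_e]$: once $\tilde\theta_{M_0}=\id_{M_0}$ is pinned down via Schur's lemma and the ribbon axiom on the unit, shift-compatibility of the twist and naturality of $\tilde\theta$ applied to the inclusion/projection maps of any chosen splitting yield $\tilde\theta_M=\id_M$ directly. This bypasses the unipotence-plus-finite-order step entirely and, in particular, does not need the (uncited in the paper's proof) fact that the twist on a unipotent group has $p$-power order; it uses only semisimplicity of $\mathscr{M}_e^{perv}$ and the ribbon structure. Two minor caveats on your write-up: the splitting $M\cong\bigoplus_j {}^pH^j(M)[-j]$ exists but is not canonical, which is harmless since naturality of $\tilde\theta$ makes the conclusion independent of the choice; and the shift-compatibility $\tilde\theta_{N[j]}=\tilde\theta_N[j]$ that you correctly flag as the point requiring verification does hold, because the $G$-equivariance datum on $N[j]$ is the shift of the datum on $N$ and $\Delta^*$ commutes with shifts -- a compatibility the paper's cohomology argument also relies on implicitly.
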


\begin{proof}
    Let $M \in e\dgg$. By \Cref{edgg cat thm}(c), $M$ is a direct sum of shifts of character sheaves in the $\L$-packet corresponding to $e$. Since the $\L$-packets are trivial, $M$ is a direct sum of shifts of $e$. In particular, for all $i$, the cohomology sheaf $\HH^i(M)$ is either a direct sum of copies of $e$ or $0$. Since $\theta_e = \mathrm{id}_e$, we have $\theta_{\HH^i(M)}=\mathrm{id}_{\HH^i(M)}$ for all $i$. Thus, $\theta_M$ is a unipotent automorphism. Since $G$ is unipotent, $G$ has exponent $p^n$ for some $n \in \N$. Hence, $(\theta_M)^{p^n}=\mathrm{id}_M$. We then have $\theta_M=\mathrm{id}_M$.
\end{proof}

\begin{prop}[{\cite[Proposition 8.1(c)]{characters}}]\label{conn prop} 
    Let $e'_{\LL} \in \dgg$ be a minimal closed idempotent constructed from an admissible pair $\hl$ with $N_G\hl=G$. Let $\theta$ denote the twist automorphism of the identity in $\dgg$. Then, if the restriction of $\theta$ to the Hecke subcategory
    $$e'_\LL\dgg \subset \dgg$$
    is trivial, then $G$ is connected.
\end{prop}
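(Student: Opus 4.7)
The plan is to prove the contrapositive: assuming $G$ is disconnected, I exhibit an object $M \in e'_\LL\dgg$ with $\theta_M \ne \mathrm{id}_M$. First I would unpack the hypothesis $N_G\hl = G$: it forces $H$ to be normal in $G$ and $\LL$ to be $G$-conjugation invariant, so that $e'_\LL$ is a minimal closed idempotent directly in $\dgg$ (no induction from a proper subgroup is required). Since $H$ is connected, $H \subset G^\circ$, and by admissibility condition (1) the quotient $G^\circ/H$ is commutative. If $G \ne G^\circ$, then $\pi_0(G) = G/G^\circ$ is a nontrivial finite $p$-group, and it acts on $G^\circ/H$.

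Next, I would describe $\mathscr{M}_{e'_\LL}^{perv}$ explicitly. By \Cref{edgg cat thm}(a) it is semisimple, and its simple objects form the $\L$-packet $\L(e'_\LL)$. Following the construction of $\L$-packets in \cite[\S 3--5]{foundations}, these simple objects can be realized via a Heisenberg-type datum built from the short exact sequence $1 \to G^\circ/H \to G/H \to \pi_0(G) \to 1$ together with the isogeny $\varphi_\LL$. In particular, when $\pi_0(G)$ is nontrivial, some simple objects in $\L(e'_\LL)$ are supported on conjugation-invariant unions of $H$-cosets meeting $G(k) \setminus G^\circ(k)$, and these are my candidates for the desired $M$.

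The heart of the argument---and what I expect to be the main obstacle---is the computation of $\theta_M$ for such a simple $M$. By Schur's lemma applied in the semisimple category $\mathscr{M}_{e'_\LL}^{perv}$, one has $\theta_M = \lambda_M \cdot \mathrm{id}_M$ for some scalar $\lambda_M \in \qlb^\times$. Tracing the definition of $\theta_M$ (pullback along the diagonal of the equivariance isomorphism $\mathrm{pr}_2^*M \xrightarrow{\sim} c^*M$) against the explicit description of $M$ in terms of $\LL$-twisted sheaves on cosets, $\lambda_M$ should emerge as a Gauss-sum-like quantity involving values of $\LL$ on commutators $[g_1,g_2]$ weighted by character values on $\pi_0(G)$. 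To conclude $\lambda_M \ne 1$ for at least one simple $M$, I would combine the nondegeneracy of $\varphi_\LL$ (admissibility condition (2)) with the nontriviality of $\pi_0(G)$ in order to rule out simultaneous triviality of these scalars across the $\L$-packet. This interplay between the Heisenberg datum on $G^\circ/H$ and the outer action of $\pi_0(G)$ is where the real work lies; once a nonzero $\lambda_M - 1$ is produced, the hypothesis that $\theta$ is trivial on $e'_\LL\dgg$ is violated, yielding the contrapositive.
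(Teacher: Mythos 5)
The paper does not actually prove this statement; it cites it verbatim as Proposition~8.1(c) of Boyarchenko's \emph{Characters of unipotent groups over finite fields} and uses it as a black box. So there is no in-paper proof to compare against; I can only assess your proposal on its own terms.

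Your high-level strategy (contrapositive; exhibit a simple object in $\L(e'_\LL)$ with $\theta_M \ne \mathrm{id}_M$ when $\pi_0(G)\ne 1$) is the right one, and the structural observations in the first two paragraphs ($H$ normal, $H\subset G^\circ$, $G^\circ/H$ commutative, the Heisenberg-type description of $\mathscr{M}_{e'_\LL}^{perv}$) are all correct. But the proof as written has a genuine gap exactly where you flag ``the real work.'' You assert that $\theta_M$ ``should emerge as a Gauss-sum-like quantity'' and that nondegeneracy of $\varphi_\LL$ together with $\pi_0(G)\ne 1$ should ``rule out simultaneous triviality,'' but no mechanism is given. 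Admissibility condition (2) concerns the pairing on the \emph{connected} quotient $G'^\circ/H$, and it is not at all clear why its nondegeneracy constrains the twists on simple objects supported over nonidentity components of $G/G^\circ$ --- those twists are governed instead by the equivariance structure along the outer $\pi_0(G)$-action (concretely, the scalar by which $g$ acts on the stalk $M_g$ for $g$ in the supporting conjugacy class), which is a separate piece of data from $\varphi_\LL$. In the toy case $H=1$, $G$ a finite constant $p$-group, one sees that the twist on a simple $(c,\chi)$ is a character value rather than a Gauss sum, and the nonvanishing comes from the nondegeneracy of the canonical pairing on the Drinfeld double of $\pi_0(G)$, not from $\varphi_\LL$. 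Your outline needs to identify and exploit \emph{that} nondegeneracy (or reduce to it via the Heisenberg induction step), and to actually show that, over a nontrivial component, at least one of the relevant characters of the stabilizer must be nontrivial at a representative $g$. Without this, the argument does not yet close.
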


\begin{lemma}\label{conn lem}
    If $G$ has trivial $\L$-packets, then $G$ is connected.
\end{lemma}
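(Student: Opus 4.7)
The plan is to exhibit an admissible pair $\hl$ for $G$ whose normalizer $N_G\hl$ equals all of $G$, use \Cref{triv twists} to conclude that the twist automorphism is trivial on the Hecke subcategory generated by the associated minimal closed idempotent $e'_\LL$, and then invoke \Cref{conn prop} to obtain connectedness of $G$.

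The natural candidate is the tautological pair $(G^\circ,\qlb)$, in which $H = G^\circ$ is the neutral connected component and $\LL = \qlb$ is the trivial (constant) multiplicative local system. First I would verify admissibility. Since $G^\circ$ is normal in $G$ and $\qlb$ is manifestly $G$-invariant, the normalizer $G' := N_G(G^\circ,\qlb)$ equals $G$. Condition~(1) holds because $G'^\circ/H = G^\circ/G^\circ$ is trivial, hence commutative; condition~(2) is vacuous, since the structure map $\varphi_\LL$ on the trivial group is trivially an isogeny; condition~(3) is vacuous because $G(k)\setminus G'(k) = \emptyset$.

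Next I would form the associated object $e'_\LL = \iota_!(\qlb \otimes \K_{G^\circ}) \in \dgg$, where $\iota \colon G^\circ \hookrightarrow G$ is the inclusion. The lemma stated just before the existence theorem for minimal closed idempotents asserts that $e'_\LL$ is a closed idempotent, minimal as a weak idempotent; since minimal weak and minimal closed idempotents coincide, $e'_\LL$ is a minimal closed idempotent in $\dgg$ itself---no induction from a smaller $\dggp$ is needed, precisely because we arranged $G' = G$. By the hypothesis that $G$ has trivial $\L$-packets, $\L(e'_\LL)$ is a singleton, so \Cref{triv twists} gives that the restriction of the twist automorphism to $e'_\LL\dgg$ is trivial. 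Applying \Cref{conn prop} to $e'_\LL$ then yields that $G$ is connected.

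There is no real obstacle: the proof reduces to a direct application of \Cref{conn prop} to an obvious admissible pair. The only bookkeeping is the verification of the three admissibility conditions, and each degenerates precisely because the choice $H = G^\circ$ with the trivial local system forces $G' = G$, making the relevant quotient trivial and the relevant complement set empty.
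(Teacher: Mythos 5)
Your proof is correct and takes essentially the same route as the paper: both take the tautological pair $(G^\circ,\qlb)$, observe via \Cref{triv twists} that the associated Hecke subcategory $e'_{\qlb}\dgg$ has trivial twists because the corresponding $\L$-packet is a singleton, and then invoke \Cref{conn prop}. The only cosmetic difference is that you verify admissibility of $(G^\circ,\qlb)$ by checking the three conditions directly (all of which degenerate because $G'=G$), whereas the paper cites \Cref{ad pair prop} to the same effect.
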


\begin{proof}
    By \Cref{ad pair prop}, $(G^\circ, \qlb)$ is an admissible pair for $G$ with $N_G(G^\circ, \qlb) =G$. Since $G$ has trivial $\L$-packets, the Hecke subcategory $e'_{\qlb}\dgg$ has trivial twists. By \Cref{conn prop}, $G$ is connected. 
\end{proof}

The proof of \Cref{thm} utilizes the relationship between twists in the category $\dgg$ and the Asai twisting operator that Deshpande explores in \cite{unipshint}. 

\begin{prop}[{\cite[Proposition 3.4]{unipshint}}]\label{eigen}
   Let $M \in \dgg$ be such that $F^*M\cong M$. 
   $$\Theta_F(t_M)=\theta^{-1}_Mt_M.$$
\end{prop}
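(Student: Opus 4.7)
The plan is to compute $\Theta(t_M)(g) = t_M(N_1(g))$ by a pointwise trace calculation on the stalk $M_g$, using the compatibility between the Frobenius isomorphism $F^*M \cong M$ and the $G$-equivariant structure on $M$. Fix $g \in G^F$ and use Lang's theorem to write $g = xF(x)^{-1}$, so that $y_0 := N_1(g) = F(x)^{-1}x$. A direct check gives $y_0 \in G^F$ (both $x$ and $F(x)$ conjugate $g$ to $y_0$), and the clean algebraic identity $F(x)x^{-1} = g^{-1}$ drops out of $g = xF(x)^{-1}$ on rearrangement.

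Let $\phi: c^*M \xrightarrow{\sim} \pr_2^*M$ denote the $G$-equivariant structure, so that on stalks $\phi_{(a,y)}: M_{aya^{-1}} \xrightarrow{\sim} M_y$, and let $\beta: F^*M \xrightarrow{\sim} M$ be the Frobenius isomorphism, so that $\beta_y: M_{F(y)} \to M_y$ on stalks. The condition that $\beta$ is a morphism in $\dgg$ translates to the identity $\phi_{(a,y)} \circ \beta_{aya^{-1}} = \beta_y \circ \phi_{(F(a), F(y))}$ for every $(a,y) \in G \times G$. Specializing to $(a,y) = (x, y_0)$, and using $F(y_0) = y_0$ together with $F(x)y_0 F(x)^{-1} = g = xy_0x^{-1}$, this yields
\[ \beta_{y_0} = \phi_{(x, y_0)} \circ \beta_g \circ \phi_{(F(x), y_0)}^{-1}.\]

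Taking the trace on $M_{y_0}$ and applying cyclicity turns this into
\[ t_M(y_0) = \text{Tr}\bigl(\beta_g \circ \phi_{(F(x), y_0)}^{-1} \circ \phi_{(x, y_0)};\, M_g\bigr).\]
Next, I would invoke the cocycle condition $\phi_{(ab, y)} = \phi_{(b, y)} \circ \phi_{(a, byb^{-1})}$ with $b = x$ and $a = F(x)x^{-1} = g^{-1}$ to simplify $\phi_{(F(x), y_0)}^{-1} \circ \phi_{(x, y_0)}$ to $\phi_{(g^{-1}, g)}^{-1}$. A further application of the cocycle with $a = g$, $b = g^{-1}$ (together with $\phi_{(e, g)} = \id$) identifies $\phi_{(g^{-1}, g)}^{-1}$ with $\theta_M^{-1}|_g$, giving
\[ \Theta(t_M)(g) = t_M(y_0) = \text{Tr}\bigl(\beta_g \circ \theta_M^{-1}|_g;\, M_g\bigr) = (\theta_M^{-1} t_M)(g).\]

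The main obstacle is correctly lining up the sign conventions: the direction of the equivariance isomorphism, the placement of Frobenius in the compatibility identity, and the orientation of the cocycle all have to agree so that the twist appears with its inverse rather than with itself (two wrong choices can easily cancel and produce a superficially plausible but incorrect answer). For $M$ a bounded complex rather than a single sheaf, one reduces to each cohomology sheaf and reassembles via the alternating-sum definition of $t_M$; every ingredient (equivariance, Frobenius compatibility, cocycle, and cyclicity of trace) is functorial in $M$, so this bookkeeping is routine.
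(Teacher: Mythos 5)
The paper does not give its own proof of this proposition: it is quoted directly from Deshpande \cite{unipshint} (his Proposition~3.4), so there is no in-paper argument to compare against. Your stalk-by-stalk verification is the natural and correct way to prove it. I checked the chain of identities under this paper's conventions ($\phi\colon c^*M\xrightarrow{\sim}\pr_2^*M$, $\theta_M=\Delta^*(\phi^{-1})$, so $\theta_M|_g=\phi_{(g,g)}^{-1}$), and they line up: the Frobenius/equivariance compatibility on stalks reads $\phi_{(a,y)}\circ\beta_{aya^{-1}}=\beta_y\circ\phi_{(F(a),F(y))}$ (note that the morphism condition as printed in Definition~1.6 of this paper has $\phi$ and $\psi$ swapped — you are using the corrected version); specializing to $(a,y)=(x,y_0)$ with $g=xF(x)^{-1}$, $y_0=N_1(g)$ gives the conjugation formula; the cocycle with $a=F(x)x^{-1}=g^{-1}$, $b=x$ collapses $\phi_{(F(x),y_0)}^{-1}\circ\phi_{(x,y_0)}$ to $\phi_{(g^{-1},g)}^{-1}$; and $\phi_{(g^{-1},g)}=\phi_{(g,g)}^{-1}=\theta_M|_g$ gives the inverse twist on the nose. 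Two small points to tighten if you wrote this out fully: the identification of $\theta_M$ with a scalar requires $\Hom(M,M)\cong\qlb$, which the paper invokes in a remark and which holds because in the application $M$ is a simple perverse sheaf; and the reduction from the complex $M$ to its cohomology sheaves should say explicitly that $\beta$, $\phi$, and $\theta$ commute with $\HH^i$ so the alternating-trace bookkeeping is legitimate. Otherwise this is sound.
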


Note that here we are we are identifying $\theta_M$ with a scalar via the isomorphism $\Hom(M,M) \cong \qlb^\times$.

\begin{remark}
   By \Cref{basis}, we have that the trace of Frobenius of the $F$-stable character sheaves form a basis for the space $C(G^F/\sim)$. Then, \Cref{eigen} tells us that these are eigenvectors for the twisting operator $\Theta_F$.
\end{remark}

We now have the tools to prove the following: 

\begin{theorem}\label{thm}
    If $G$ has trivial $\L$-packets, then $G$ is easy.
\end{theorem}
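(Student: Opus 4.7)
The plan is to deduce ``easy'' from \Cref{asai prop} by showing that the Asai twisting operator $\Theta^{(m)}$ acts trivially on $C(G^{F^m}/\sim)$ for every $m \in \Z_{>0}$, using the sheaf-function correspondence to convert the trivial-$\L$-packet hypothesis into triviality of twists, which in turn controls the eigenvalues of $\Theta^{(m)}$ on a natural basis.

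First, I would observe that $G$ is connected by \Cref{conn lem}, so that \Cref{asai prop} is applicable. Fix $m \in \Z_{>0}$ and work with the Frobenius $F^m$ in place of $F$. Applying \Cref{basis} to $(G, F^m)$ yields an orthonormal basis of $C(G^{F^m}/\sim)$ consisting of the functions $t_{M,m}$ as $M$ ranges over $F^m$-stable character sheaves on $G$. The analog of \Cref{eigen} for $F^m$ (valid since that proposition only uses that $F$ is a Frobenius endomorphism, a property $F^m$ inherits) then gives
\[
\Theta^{(m)}(t_{M,m}) = \theta_M^{-1}\, t_{M,m}.
\]

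Next, I would kill the twists. Every character sheaf $M$ on $G$ lies in some Hecke subcategory $e\dgg$ for a minimal closed idempotent $e$. Since $G$ has trivial $\L$-packets, $\L(e)$ is a singleton, so \Cref{triv twists} gives $\theta_M = \id$, i.e.\ the scalar $1$. Hence $\Theta^{(m)}(t_{M,m}) = t_{M,m}$ for each basis element, so $\Theta^{(m)}$ acts as the identity on $C(G^{F^m}/\sim)$. As this holds for every $m$, \Cref{asai prop} forces $G$ to be easy.

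The argument is essentially an assembly of the previously established machinery, funnelled through \Cref{asai prop}. There is no serious new obstacle; the only point requiring care is the legitimacy of applying \Cref{basis} and \Cref{eigen} with $F^m$ in place of $F$, which is immediate because both statements depend only on having a Frobenius endomorphism. The central conceptual content is the chain of implications: trivial $\L$-packets transfer (via \Cref{triv twists}) to triviality of twists on every Hecke subcategory, which transfers (via \Cref{eigen} and \Cref{basis}) to triviality of $\Theta^{(m)}$ on all of $C(G^{F^m}/\sim)$, which is exactly the hypothesis of \Cref{asai prop} needed to conclude easiness.
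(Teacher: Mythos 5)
Your proposal is correct and follows essentially the same route as the paper: connectedness via \Cref{conn lem}, reduction to triviality of $\Theta^{(m)}$ via \Cref{asai prop}, the eigenbasis description from \Cref{basis} and \Cref{eigen}, and triviality of the twist on each character sheaf from the trivial $\L$-packet hypothesis. The only cosmetic difference is that you invoke \Cref{triv twists} to kill the twists, whereas the paper appeals directly to \Cref{edgg cat thm}(b) together with the observation that $\theta_e = 1$ for the unit object $e$; both are the same idea and either is fine.
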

    
\begin{proof}
    By \Cref{conn lem}, $G$ is connected. Hence by Theorem 2.2, it is enough to show that $\Theta_{F^m}$ is trivial on $C(G^{F^m}/ \sim)$ for all $m \in \Z_{>0}.$ Recall that by \Cref{basis}, the functions $t_{M,m}$ for $F^m$-stable character sheaves $M$ form a basis of the vector space $C(G^{F^m}/\sim)$. Furthermore, by \Cref{eigen}, this basis is an eigenbasis for $\Theta_{F^m}$ with eigenvalues determined by the twisting operators $\theta_M$. Hence it is enough to show that $\theta_M= 1$ for every $F^m$-stable character sheaf $M$. By assumption, $G$ has trivial $\L$-packets, so by \Cref{edgg cat thm}(b), every character sheaf is a shift of some minimal closed idempotent $e$. Necessarily $\theta_e = 1$ since $e$ is a unit object with respect to convolution in $e\dgg$, so the proof is complete.
\end{proof}

Invoking \Cref{easy twists}, the proof on \Cref{thm} gives us the following corollary: 

\begin{corollary}
     The following are equivalent:
    \item[(1)] $G$ is easy;
    \item[(2)] $G$ has trivial $\L$-packets;
    \item[(3)] For all $M \in \dgg$, the twist automorphism $\theta_M$ is the identity.
\end{corollary}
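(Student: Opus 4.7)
The corollary packages three statements whose pairwise equivalence essentially follows from results already in hand. The plan is to establish the three-term cycle $(1)\Rightarrow(2)$, $(2)\Rightarrow(1)$ via \Cref{thm} (combined with Boyarchenko's direction of the conjecture in \cite{chsheaves}), $(1)\Rightarrow(3)$ via \Cref{easy twists}, and close the loop by a direct proof of $(3)\Rightarrow(1)$ that reuses the scheme of the proof of \Cref{thm}.

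For $(1)\Rightarrow(3)$ I would simply invoke \Cref{easy twists}, which says that for an easy unipotent $G$ every $\theta_M$ with $M\in\dgg$ is trivial. The substantive step is $(3)\Rightarrow(1)$, for which I would first extract connectedness and then trivialize the Asai twisting operator. For connectedness, the pair $(G^\circ,\qlb)$ is admissible for $G$ with normalizer $G$ (as in \Cref{conn lem}, via \Cref{ad pair prop}), so the hypothesis that every twist in $\dgg$ is trivial restricts to a trivial twist on the Hecke subcategory $e'_{\qlb}\dgg$; \Cref{conn prop} then forces $G$ to be connected.

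With $G$ connected, I would next argue that $\Theta^{(m)}$ is trivial on $C(G^{F^m}/\sim)$ for every $m\in\Z_{>0}$. By \Cref{basis}, the trace functions $t_{M,m}$ of the $F^m$-stable character sheaves $M$ form a basis of $C(G^{F^m}/\sim)$, and by \Cref{eigen} each such $t_{M,m}$ is an eigenvector of $\Theta^{(m)}$ with eigenvalue $\theta_M^{-1}$. Since $(3)$ supplies $\theta_M=1$ for \emph{every} $M\in\dgg$, in particular for every $F^m$-stable character sheaf, the operator $\Theta^{(m)}$ acts as the identity on a basis and is therefore trivial.

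Finally, \Cref{asai prop} converts triviality of $\Theta^{(m)}$ for all $m$ into easiness of $G$, giving $(3)\Rightarrow(1)$. Transitivity with the already-established equivalence $(1)\Leftrightarrow(2)$ then yields $(2)\Leftrightarrow(3)$ and completes the cycle. I do not anticipate a serious obstacle: the proof is essentially a repackaging of the machinery used for \Cref{thm}, with the hypothesis of trivial $\L$-packets replaced by the a priori stronger hypothesis of globally trivial twists, which is exactly what is needed to feed both \Cref{conn prop} and \Cref{eigen} without first passing through the $\L$-packet classification.
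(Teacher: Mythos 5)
Your proposal is correct and matches the paper's intent: the paper's one-line justification (``Invoking \Cref{easy twists}, the proof of \Cref{thm} gives us the following corollary'') is precisely the plan you flesh out, namely $(2)\Leftrightarrow(1)$ from \Cref{thm} together with Boyarchenko's converse, $(1)\Rightarrow(3)$ from \Cref{easy twists}, and $(3)\Rightarrow(1)$ by rerunning the \Cref{thm} argument (connectedness via \Cref{conn prop}, then \Cref{basis}, \Cref{eigen}, and \Cref{asai prop}) with the trivial-twists hypothesis fed in directly in place of the trivial-$\L$-packets hypothesis.
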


\bibliography{refs.bib}

\end{document}